\documentclass[12pt,oneside]{article}
\usepackage{amsmath,amssymb,amsfonts,amsthm}
\textheight = 9.5in            
\textwidth = 6in \leftmargin=1.25in \rightmargin=1.25in
\topmargin=0.75in
\parindent=0.3in
\hoffset -1.3truecm \voffset -3truecm


\newcommand{\T}{{\cal T}}

\newcommand {\cp}{\mathfrak{X}(\pi (M))}


\setlength\arraycolsep{2pt}    

\def\Section#1{\vspace{30truept}\addtocounter{section}{1}\setcounter{thm}{0}
\setcounter{equation}{0}{\noindent\Large\bf
    \arabic{section}.~~#1}\par \vspace{12pt}}

\newtheorem{thm}{Theorem}[section]
\newtheorem{cor}[thm]{Corollary}
\newtheorem{lem}[thm]{Lemma}
\newtheorem{prop}[thm]{Proposition}
\newtheorem{defn}[thm]{Definition}

\newtheorem{rem}[thm]{Remark}


\numberwithin{equation}{section}

\begin{document}
\title{{{\bf On Horizontal Recurrent Finsler Connections}}\footnote{ArXiv: 1706.06079 [math.DG]}}
\author{\bf{ Nabil L. Youssef$^{\,1}$ and A. Soleiman$^{2}$}}
\date{}
\maketitle                     
\vspace{-1.16cm}
\begin{center}
{$^{1}$Department of Mathematics, Faculty of Science, Cairo
University, Giza, Egypt.\\ nlyoussef@sci.cu.edu.eg,\, nlyoussef2003@yahoo.fr}
\end{center}

\begin{center}
{$^{2}$Department of Mathematics, Faculty of Science, Benha
University, Benha,
 Egypt.\\ amr.hassan@fsci.bu.edu.eg,\, amrsoleiman@yahoo.com}
\end{center}

\vspace{0.7cm} \maketitle
\smallskip

\noindent{\bf Abstract.} In this paper we adopt the pullback approach to global Finsler geometry. We investigate horizontally recurrent Finsler connections. We prove that for each scalar ($\pi$)1-form $A$, there exists a unique horizontally recurrent Finsler connection whose $h$-recurrence form is $A$. This result generalizes the existence and uniqueness theorem of Cartan connection. We then study some properties of a special kind of horizontally recurrent Finsler connection, which we call special HRF-connection.

\bigskip
\medskip\noindent{\bf Keywords:\/}\, Finsler manifold; Cartan connection; horizontal recurrent Finsler connection, $h$-isotropic; $P$-symmetric.

\medskip
\noindent{\bf MSC 2010}: 53B40; 53C60

\Section{Introduction}

 The theory of connections is an important field of research of differential
 geometry. It was initially developed to solve pure geometrical
 problems. The most important linear connections in Finsler geometry have been studied by many authors, locally (see for example \cite{r91, Hashiguchi, Hojo, Matsumoto, r93}) and globally (\cite{r61,r21,r22,r92,r94}). In \cite{r92, r94, r96}, we have established  new
proofs of global versions of the existence and
uniqueness theorems for the fundamental linear connections on the
pullback bundle of a Finsler manifold.
\par

In the present paper we investigate a certain type of Finsler connections that generalize Cartan connection; these connections are called horizontally recurrent Finsler (HRF-) connections. We still adopt the pullback formalism
to global Finsler geometry. We prove that for any given scalar ($\pi$)1-form, there exists a unique HRF-connection whose $h$-recurrence form is $A$. We display the associated spray and the associated nonlinear connection. We then study a special kind of such connections which we call special HRF-connection. The results of this paper globalize and generalize some results of \cite{hv, h}.

\Section{Notation and Preliminaries}

In this section, we give a brief account of some basic concepts
 of the pullback approach to intrinsic Finsler geometry necessary for this work. For more
 details, we refer to \cite{ r93, amr3, r94, r96}. We
 shall use the notations of \cite{r94}.

 In what follows, we denote by $\pi: \T M\longrightarrow M$ the slit tangent bundle of $M$, $\mathfrak{F}(TM)$ the algebra of $C^\infty$ functions on $TM$, $\cp$ the $\mathfrak{F}(TM)$-module of differentiable sections of the pullback bundle $\pi^{-1}(T M)$.
The elements of $\mathfrak{X}(\pi (M))$ will be called $\pi$-vector
fields and will be denoted by barred letters $\overline{X} $. The
tensor fields on $\pi^{-1}(TM)$ will be called $\pi$-tensor fields.
The fundamental $\pi$-vector field is the $\pi$-vector field
$\overline{\eta}$ defined by $\overline{\eta}(u)=(u,u)$ for all
$u\in \T M$.
\par
We have the following short exact sequence of vector bundles
$$0\longrightarrow
 \pi^{-1}(TM)\stackrel{\gamma}\longrightarrow T(\T M)\stackrel{\rho}\longrightarrow
\pi^{-1}(TM)\longrightarrow 0 ,\vspace{-0.1cm}$$ with the well known
definitions of  the bundle morphisms $\rho$ and $\gamma$. The vector
space $V_u (\T M)= \{ X \in T_u (\T M) : d\pi(X)=0 \}$  is the vertical space to $M$ at $u\in \T M$.
\par
Let $D$ be  a linear connection on the pullback bundle $\pi^{-1}(TM)$.
 We associate with $D$ the map \vspace{-0.1cm} $K:T \T M\longrightarrow
\pi^{-1}(TM):X\longmapsto D_X \overline{\eta} ,$ called the
connection map of $D$.  The vector space $H_u (\T M)= \{ X \in T_u
(\T M) : K(X)=0 \}$ is called the horizontal space to $M$ at $u$ .
   The connection $D$ is said to be regular if
$$ T_u (\T M)=V_u (\T M)\oplus H_u (\T M) \,\,\,  \forall \, u\in \T M.$$

If $M$ is endowed with a regular connection, then the vector bundle
   maps $
 \gamma,\, \rho |_{H(\T M)}$ and $K |_{V(\T M)}$
 are vector bundle isomorphisms. The map
 $\beta:=(\rho |_{H(\T M)})^{-1}$
 will be called the horizontal map of the connection
$D$.
\par
 The horizontal ((h)h-) and
mixed ((h)hv-) torsion tensors of $D$, denoted by $Q $ and $ T $
respectively, are defined by \vspace{-0.2cm}
$$Q (\overline{X},\overline{Y})=\textbf{T}(\beta \overline{X}\beta \overline{Y}),
\, \,\,\, T(\overline{X},\overline{Y})=\textbf{T}(\gamma
\overline{X},\beta \overline{Y}) \quad \forall \,
\overline{X},\overline{Y}\in\mathfrak{X} (\pi (M)),\vspace{-0.2cm}$$
where $\textbf{T}$ is the (classical) torsion tensor field
of $D$.
\par
The horizontal (h-), mixed (hv-) and vertical (v-) curvature tensors
of $D$, denoted by $R$, $P$ and $S$
respectively, are defined by
$$R(\overline{X},\overline{Y})\overline{Z}=\textbf{K}(\beta
\overline{X}\beta \overline{Y})\overline{Z},\quad
 {P}(\overline{X},\overline{Y})\overline{Z}=\textbf{K}(\beta
\overline{X},\gamma \overline{Y})\overline{Z},\quad
 {S}(\overline{X},\overline{Y})\overline{Z}=\textbf{K}(\gamma
\overline{X},\gamma \overline{Y})\overline{Z}, $$
 where $\textbf{K}$
is the (classical) curvature tensor field of $D$.
\par
The contracted curvature tensors of $D$, denoted by $\widehat{{R}}$, $\widehat{ {P}}$ and $\widehat{ {S}}$ (known
also as the (v)h-, (v)hv- and (v)v-torsion tensors, respectively), are defined by
$$\widehat{ {R}}(\overline{X},\overline{Y})={ {R}}(\overline{X},\overline{Y})\overline{\eta},\quad
\widehat{ {P}}(\overline{X},\overline{Y})={
{P}}(\overline{X},\overline{Y})\overline{\eta},\quad \widehat{
{S}}(\overline{X},\overline{Y})={
{S}}(\overline{X},\overline{Y})\overline{\eta}.$$

\par
Let $(M,L)$ be a Finsler manifold and  $g$ the Finsler metric defined by $L$.
Let $\nabla$ and $D^{\circ}$ be the Cartan and Berwald connections associated with $(M,L)$. We quote the following
results from \cite{r92}.

\begin{prop}\label{prop.1} The Berwald connection $D^{\circ}$ is explicitly expressed in
terms of Cartan connection $\nabla$ in the form:
\begin{description}
   \item[\em{\textbf{(a)}}] $ {{D}}^{\circ}_{\gamma \overline{X}}\overline{Y}=\nabla _{\gamma
  \overline{X}}\overline{Y}-T(\overline{X},\overline{Y})=\rho[\gamma
\overline{X}, \beta \overline{Y}]$,

  \item[\em{\textbf{(b)}}] $ {{D}}^{\circ}_{\beta \overline{X}}\overline{Y}=\nabla _{\beta
  \overline{X}}\overline{Y}+\widehat{P}(\overline{X},\overline{Y})=K[\beta
\overline{X}, \gamma \overline{Y}].$
\end{description}
\end{prop}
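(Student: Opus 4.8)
The plan is to reduce the proposition to two short computations, one for each covariant-derivative direction, after first recording the structural input that makes the statement meaningful: the maps $\gamma,\rho,\beta$ and the connection map $K$ are \emph{common} to both connections. This is because $\nabla$ and $D^{\circ}$ are both associated with the same canonical spray of $(M,L)$ and hence induce the same horizontal distribution, so that $K^{D^{\circ}}=K^{\nabla}=:K$ and $\beta^{D^{\circ}}=\beta^{\nabla}=:\beta$; I would either recall this from the cited framework or take the bracket formulas on the right of (a) and (b) as the intrinsic characterization of $D^{\circ}$. Note that since $T(\T M)=V(\T M)\oplus H(\T M)=\gamma(\pi^{-1}(TM))\oplus\beta(\pi^{-1}(TM))$, a linear connection on $\pi^{-1}(TM)$ is completely determined by its action along $\gamma\overline{X}$ and $\beta\overline{X}$, so the two bracket formulas do fix $D^{\circ}$. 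With this in hand the task becomes: establish the middle equalities, which express the difference $D^{\circ}-\nabla$ through the mixed torsion $T$ in the vertical direction and through the contracted $hv$-curvature $\widehat{P}$ in the horizontal direction.

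For part (a) I would expand the classical torsion $\textbf{T}$ of the Cartan connection at the pair $(\gamma\overline{X},\beta\overline{Y})$ via $\textbf{T}(U,V)=\nabla_{U}\rho V-\nabla_{V}\rho U-\rho[U,V]$. The exact-sequence identities $\rho\circ\gamma=0$ and $\rho\circ\beta=\mathrm{id}$ collapse this immediately to $\textbf{T}(\gamma\overline{X},\beta\overline{Y})=\nabla_{\gamma\overline{X}}\overline{Y}-\rho[\gamma\overline{X},\beta\overline{Y}]$. Since by definition $T(\overline{X},\overline{Y})=\textbf{T}(\gamma\overline{X},\beta\overline{Y})$, rearranging gives $\rho[\gamma\overline{X},\beta\overline{Y}]=\nabla_{\gamma\overline{X}}\overline{Y}-T(\overline{X},\overline{Y})$, which together with the bracket formula $D^{\circ}_{\gamma\overline{X}}\overline{Y}=\rho[\gamma\overline{X},\beta\overline{Y}]$ yields (a).

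For part (b) I would instead feed the fundamental $\pi$-vector field $\overline{\eta}$ into the classical curvature $\textbf{K}$ of $\nabla$, computing $\textbf{K}(\beta\overline{X},\gamma\overline{Y})\overline{\eta}=\nabla_{\beta\overline{X}}\nabla_{\gamma\overline{Y}}\overline{\eta}-\nabla_{\gamma\overline{Y}}\nabla_{\beta\overline{X}}\overline{\eta}-\nabla_{[\beta\overline{X},\gamma\overline{Y}]}\overline{\eta}$. The two structural identities for the regular Cartan connection, namely $\nabla_{\gamma\overline{Y}}\overline{\eta}=\overline{Y}$ (i.e. $K\circ\gamma=\mathrm{id}$) and $\nabla_{\beta\overline{X}}\overline{\eta}=0$ (i.e. $K\circ\beta=0$), together with the defining relation $K(W)=\nabla_{W}\overline{\eta}$ for the connection map, reduce the right-hand side to a combination of $\nabla_{\beta\overline{X}}\overline{Y}$ and $K[\beta\overline{X},\gamma\overline{Y}]$. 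Recognizing $\widehat{P}(\overline{X},\overline{Y})=\textbf{K}(\beta\overline{X},\gamma\overline{Y})\overline{\eta}$ then produces the desired relation between $K[\beta\overline{X},\gamma\overline{Y}]$, $\nabla_{\beta\overline{X}}\overline{Y}$ and $\widehat{P}(\overline{X},\overline{Y})$; comparing with the bracket formula $D^{\circ}_{\beta\overline{X}}\overline{Y}=K[\beta\overline{X},\gamma\overline{Y}]$ gives (b).

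I expect the main obstacle to be bookkeeping rather than conceptual. Two points need care: (i) justifying that $\nabla$ and $D^{\circ}$ genuinely share $\gamma,\rho,\beta,K$, which is what lets me write a single $\beta$ and a single $K$ throughout; and (ii) keeping the sign conventions for $\textbf{T}$, $\textbf{K}$, $T$ and $\widehat{P}$ perfectly consistent, since the $-T$ in (a) and the $+\widehat{P}$ in (b) are precisely what these conventions dictate and a flipped curvature convention would reverse the sign in (b). Beyond the four identities $\rho\gamma=0$, $\rho\beta=\mathrm{id}$, $K\gamma=\mathrm{id}$, $K\beta=0$ and the defining formula $K(W)=\nabla_{W}\overline{\eta}$, the argument uses nothing more than the definitions of the torsion and curvature tensors recalled above, so no genuinely new machinery is required.
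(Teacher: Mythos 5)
Your route is sound in its mechanics, and a preliminary remark is in order: this paper contains no proof of Proposition \ref{prop.1} at all --- it is quoted from \cite{r92} (``We quote the following results from \cite{r92}''), so the only comparison available is with that source, whose argument is essentially the direct computation you propose. Your reduction is legitimate: since $T(\T M)=\gamma(\pi^{-1}(TM))\oplus\beta(\pi^{-1}(TM))$ and both $\nabla$ and $D^{\circ}$ are regular connections whose associated nonlinear connection is the Barthel connection, they share $\rho,\gamma,\beta,K$, and the bracket formulas do pin down $D^{\circ}$. Part (a) is complete as you wrote it: $\rho\circ\gamma=0$ and $\rho\circ\beta=\mathrm{id}$ collapse $\textbf{T}(\gamma\overline{X},\beta\overline{Y})$ to $\nabla_{\gamma\overline{X}}\overline{Y}-\rho[\gamma\overline{X},\beta\overline{Y}]$, which is exactly the first equality of (a).

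The point you flagged but left unresolved --- the sign convention for $\textbf{K}$ --- is, however, the crux of (b), and as written your proof does not yet yield the stated $+\widehat{P}$. With the common convention $\textbf{K}(X,Y)\overline{Z}=\nabla_{X}\nabla_{Y}\overline{Z}-\nabla_{Y}\nabla_{X}\overline{Z}-\nabla_{[X,Y]}\overline{Z}$, your computation (using $K\circ\gamma=\mathrm{id}$, $K\circ\beta=0$) gives $\widehat{P}(\overline{X},\overline{Y})=\nabla_{\beta\overline{X}}\overline{Y}-K[\beta\overline{X},\gamma\overline{Y}]$, i.e. $D^{\circ}_{\beta\overline{X}}\overline{Y}=\nabla_{\beta\overline{X}}\overline{Y}-\widehat{P}(\overline{X},\overline{Y})$, the opposite of the proposition. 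The convention actually in force in this series of papers is $\textbf{K}(X,Y)\overline{Z}=-\nabla_{X}\nabla_{Y}\overline{Z}+\nabla_{Y}\nabla_{X}\overline{Z}+\nabla_{[X,Y]}\overline{Z}$, and you can read this off from the present paper itself: Definition \ref{def.2}\textbf{(a)} writes $h$-isotropy as $R(\overline{X},\overline{Y})\overline{Z}=k_{o}\{g(\overline{X},\overline{Z})\overline{Y}-g(\overline{Y},\overline{Z})\overline{X}\}$, the mirror image of the Riemannian constant-curvature identity under the usual convention; moreover Proposition \ref{prop.2}\textbf{(b)}, $(D^{\circ}_{\beta\overline{X}}g)(\overline{Y},\overline{Z})=-2g(\widehat{P}(\overline{X},\overline{Y}),\overline{Z})$, is consistent with $+\widehat{P}$ in (b) precisely because, for the Cartan connection, $g(\widehat{P}(\overline{X},\overline{Y}),\overline{Z})$ is totally symmetric (locally $C_{ijk|0}$), which forces the flipped convention. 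Once you commit to that convention, your computation of $\textbf{K}(\beta\overline{X},\gamma\overline{Y})\overline{\eta}$ gives exactly $K[\beta\overline{X},\gamma\overline{Y}]=\nabla_{\beta\overline{X}}\overline{Y}+\widehat{P}(\overline{X},\overline{Y})$ and the proof closes; without committing, (b) is underdetermined up to sign, which is the only genuine deficiency in your write-up.
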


\begin{prop} \label{prop.2} The Berwald connection $D^{\circ}$ has the properties:
\begin{description}
\item[(a)] $({{D}}^{\circ}_{\gamma
\overline{X}}\,g)(\overline{Y},\overline{Z}) =2 T(\overline{X},
\overline{Y},\overline{Z})$,

\item[(b)] $({{D}}^{\circ}_{\beta \overline{X}}\,g)(\overline{Y},\overline{Z}) =-2
  g(\widehat{P}(\overline{X},\overline{Y}),\overline{Z})$,
\end{description}
where $ T(\overline{X},
\overline{Y},\overline{Z}):= g(T(\overline{X},\overline{Y}),\overline{Z})$.
\end{prop}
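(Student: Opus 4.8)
The plan is to derive both identities by comparing the Berwald connection $D^{\circ}$ with the Cartan connection $\nabla$ via Proposition~\ref{prop.1}, exploiting that $\nabla$ is metrical. Recall (a defining property of the Cartan connection) that $\nabla g=0$; written through the Leibniz rule this means that for every vector field $W$ on $\T M$ and all $\overline{Y},\overline{Z}\in\cp$,
$$ W\cdot g(\overline{Y},\overline{Z}) = g(\nabla_W\overline{Y},\overline{Z}) + g(\overline{Y},\nabla_W\overline{Z}). $$
On the other hand, the covariant derivative of the $\pi$-tensor $g$ is by definition
$$ (D^{\circ}_W g)(\overline{Y},\overline{Z}) = W\cdot g(\overline{Y},\overline{Z}) - g(D^{\circ}_W\overline{Y},\overline{Z}) - g(\overline{Y},D^{\circ}_W\overline{Z}). $$
Subtracting the first relation to cancel the non-tensorial term $W\cdot g(\overline{Y},\overline{Z})$, I obtain the comparison formula
$$ (D^{\circ}_W g)(\overline{Y},\overline{Z}) = -\,g\big(D^{\circ}_W\overline{Y}-\nabla_W\overline{Y},\,\overline{Z}\big) - g\big(\overline{Y},\,D^{\circ}_W\overline{Z}-\nabla_W\overline{Z}\big), $$
valid for any $W$. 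Everything then reduces to inserting the explicit difference $D^{\circ}-\nabla$ from Proposition~\ref{prop.1} and using the symmetries of the relevant torsion tensors.

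For part (a) I would set $W=\gamma\overline{X}$. By Proposition~\ref{prop.1}(a), $D^{\circ}_{\gamma\overline{X}}\overline{Y}-\nabla_{\gamma\overline{X}}\overline{Y}=-T(\overline{X},\overline{Y})$, and similarly with $\overline{Z}$. Substituting into the comparison formula gives
$$ (D^{\circ}_{\gamma\overline{X}}g)(\overline{Y},\overline{Z}) = g(T(\overline{X},\overline{Y}),\overline{Z}) + g(T(\overline{X},\overline{Z}),\overline{Y}) = T(\overline{X},\overline{Y},\overline{Z}) + T(\overline{X},\overline{Z},\overline{Y}). $$
The claimed value $2\,T(\overline{X},\overline{Y},\overline{Z})$ then follows at once from the total symmetry of the (h)hv-torsion $T$ (the Cartan tensor), which gives $T(\overline{X},\overline{Z},\overline{Y})=T(\overline{X},\overline{Y},\overline{Z})$. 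This part is essentially formal, since the symmetry of $T$ is immediate from its definition.

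For part (b) I would set $W=\beta\overline{X}$ and use Proposition~\ref{prop.1}(b), where the difference is now $D^{\circ}_{\beta\overline{X}}\overline{Y}-\nabla_{\beta\overline{X}}\overline{Y}=+\widehat{P}(\overline{X},\overline{Y})$. The comparison formula yields
$$ (D^{\circ}_{\beta\overline{X}}g)(\overline{Y},\overline{Z}) = -\,g(\widehat{P}(\overline{X},\overline{Y}),\overline{Z}) - g(\widehat{P}(\overline{X},\overline{Z}),\overline{Y}). $$
To collapse this into $-2\,g(\widehat{P}(\overline{X},\overline{Y}),\overline{Z})$ I need the symmetry
$$ g(\widehat{P}(\overline{X},\overline{Z}),\overline{Y}) = g(\widehat{P}(\overline{X},\overline{Y}),\overline{Z}), $$
i.e. that $g(\widehat{P}(\overline{X},\cdot),\cdot)$ is symmetric in its last two slots (the symmetry of the Landsberg-type tensor attached to the Cartan connection). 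This is the one genuinely non-formal ingredient and hence the main obstacle: unlike the symmetry of $T$, it is not a defining property, but must be extracted from the standard identities governing the mixed curvature $P$ of the Cartan connection (or quoted from \cite{r92}). Once that symmetry is established, part (b) follows immediately, completing the proof.
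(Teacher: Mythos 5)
Your derivation is correct, and there is nothing in the paper to compare it against: Proposition \ref{prop.2} is explicitly \emph{quoted} from \cite{r92} (``We quote the following results from \cite{r92}''), so this paper contains no proof of it. Your route --- write $(D^{\circ}_{W}g)(\overline{Y},\overline{Z})$ via the Leibniz rule, cancel the derivative term $W\cdot g(\overline{Y},\overline{Z})$ using $\nabla g=0$, and insert the difference tensor $D^{\circ}-\nabla$ from Proposition \ref{prop.1} --- is the standard comparison argument and is essentially how these identities are obtained in the cited source. Part (a) is fully closed by your argument; note that you only need the symmetry of $T$ in its last two slots, $g(T(\overline{X},\overline{Y}),\overline{Z})=g(T(\overline{X},\overline{Z}),\overline{Y})$, which is one of the defining axioms of the Cartan connection (it is the analogue of axiom (C4) in Theorem \ref{th.v1}), although the Cartan tensor is in fact totally symmetric.

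The one step you defer in part (b) --- the symmetry $g(\widehat{P}(\overline{X},\overline{Z}),\overline{Y})=g(\widehat{P}(\overline{X},\overline{Y}),\overline{Z})$ --- is indeed the only non-formal ingredient, and you are right that it cannot be conjured formally: it is a property of the Cartan connection's mixed curvature, not of an arbitrary regular connection. It can be closed by the identity $\widehat{P}(\overline{X},\overline{Y})=(\nabla_{\beta\overline{\eta}}T)(\overline{X},\overline{Y})$ (the global form of the classical local relation $P_{ijk}=C_{ijk|0}$), which is established in \cite{r92, r96}: since $g(T(\cdot,\cdot),\cdot)$ is totally symmetric and $\nabla g=0$, the covariant derivative $\nabla_{\beta\overline{\eta}}T$ inherits total symmetry, so $g(\widehat{P}(\overline{X},\overline{Y}),\overline{Z})$ is symmetric in all three arguments, in particular in $\overline{Y},\overline{Z}$. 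Since the proposition itself is imported from \cite{r92}, citing that same source for this lemma is legitimate, and with it your proof is complete.
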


\par We terminate this section
by some concepts and results concerning the Klein-Grifone approach
to intrinsic Finsler geometry. For more details, we refer to
\cite{r21, r22, r27, r97}.
\par

 A semi-spray  is a vector field $X$ on $TM$,
 $C^{\infty}$ on $\T M$, $C^{1}$ on $TM$, such that
$J X = \mathcal{C}$, where $J:=\gamma \circ \rho$ and $\mathcal{C}:=\gamma\overline{\eta}$. A semispray $X$ which is
homogeneous of degree $2$ in the directional argument
($[\mathcal{C},X]= X $ ) is called a spray.

\begin{prop}{\em{\cite{r27}}}\label{spray} Let $(M,L)$ be a Finsler manifold. The vector field
$G$ on $TM$ defined by $i_{G}\Omega =-dE$ is a spray, where
 $E:=\frac{1}{2}L^{2}$ is the energy function and $\Omega:=dd_{J}E$.
 Such a spray is called the canonical spray.
 \end{prop}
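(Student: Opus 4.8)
The plan is to treat $\Omega = dd_J E$ as a symplectic form and to extract the two defining properties of a spray, namely $JG = \mathcal{C}$ and $[\mathcal{C},G]=G$, purely from Cartan-type identities relating $\Omega$, $E$, $J$ and $\mathcal{C}$. Throughout I write $\theta := d_J E$, where $d_J := i_J\,d - d\,i_J$, so that $\theta(X)=dE(JX)$ and $\Omega = d\theta$.

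First I would establish that $\Omega$ is symplectic. Closedness is immediate since $\Omega = d\theta$ is exact. For non-degeneracy I would reduce to the regularity of the Finsler metric $g$: in natural coordinates $(x^i,y^i)$ one has $\theta = (\partial E/\partial y^i)\,dx^i$, whence $\Omega = g_{ij}\,dy^j\wedge dx^i + (\ldots)\,dx^j\wedge dx^i$ with $g_{ij}=\partial^2 E/\partial y^i\partial y^j$; since $g$ is non-degenerate, so is $\Omega$. This guarantees that $i_G\Omega = -dE$ has a unique solution $G$, which is what makes $G$ well defined in the first place.

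The two key auxiliary identities I would prove are $i_J\Omega = 0$ and $i_{\mathcal{C}}\Omega = \theta$. For the first, $i_J\Omega = i_J\,d\theta = d_J\theta + d(i_J\theta)$; here $d_J\theta = d_J d_J E = 0$ because the Nijenhuis tensor of the almost-tangent structure $J$ vanishes, and $i_J\theta = 0$ because $J^2 = \gamma\rho\gamma\rho = 0$. For the second, since $i_{\mathcal{C}}\theta = dE(J\mathcal{C})=0$ (as $J\mathcal{C} = \gamma\rho\gamma\overline{\eta}=0$), Cartan's formula gives $i_{\mathcal{C}}\Omega = \mathcal{L}_{\mathcal{C}}\theta$; then the graded commutation rule $[\mathcal{L}_{\mathcal{C}},d_J] = d_{\mathcal{L}_{\mathcal{C}}J}$ together with $\mathcal{L}_{\mathcal{C}}J = -J$ and Euler's relation $\mathcal{C}(E)=2E$ for the $2$-homogeneous $E$ yields $\mathcal{L}_{\mathcal{C}}\theta = d_J(2E) - d_J E = \theta$.

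With these in hand the conclusion is short. Applying the defining equation to $JY$ and using $i_J\Omega = 0$, for every $Y$ I get $\Omega(JG,Y) = -\Omega(G,JY) = dE(JY) = \theta(Y) = \Omega(\mathcal{C},Y)$, so non-degeneracy forces $JG=\mathcal{C}$, i.e. $G$ is a semispray. For homogeneity I differentiate the defining relation along $\mathcal{C}$: from $\mathcal{L}_{\mathcal{C}}\Omega = d\,\mathcal{L}_{\mathcal{C}}\theta = \Omega$ and $\mathcal{L}_{\mathcal{C}}(-dE) = -2\,dE = 2\,i_G\Omega$ I obtain $i_{[\mathcal{C},G]}\Omega = \mathcal{L}_{\mathcal{C}}(i_G\Omega) - i_G\mathcal{L}_{\mathcal{C}}\Omega = 2\,i_G\Omega - i_G\Omega = i_G\Omega$, whence $[\mathcal{C},G]=G$ by non-degeneracy, so $G$ is a spray. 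I expect the main obstacle to be the intrinsic, coordinate-free proof of the non-degeneracy of $\Omega$ from the regularity of $g$, and the careful bookkeeping of the commutator $[\mathcal{L}_{\mathcal{C}},d_J] = d_{\mathcal{L}_{\mathcal{C}}J}$ in the Fr\"olicher--Nijenhuis algebra; the remaining steps are routine once $i_J\Omega = 0$ and $i_{\mathcal{C}}\Omega = \theta$ are secured.
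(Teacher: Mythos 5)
Your proof is correct, and since the paper offers no proof of this proposition at all --- it is quoted from Klein--Voutier \cite{r27} --- your argument is in effect the standard one from that source: nondegeneracy of $\Omega$ from the regularity of $g$, the identities $i_J\Omega=0$ and $i_{\mathcal{C}}\Omega=d_JE$ (via $[J,J]=0$, $J\mathcal{C}=0$, $\mathcal{L}_{\mathcal{C}}J=-J$, $\mathcal{C}E=2E$), and then $JG=\mathcal{C}$ and $[\mathcal{C},G]=G$ by nondegeneracy. The only caveat is that nondegeneracy and smoothness hold on the slit bundle $\T M$, so the $C^{1}$ behaviour of $G$ on the zero section, demanded by the paper's definition of a semispray, is a separate (routine, homogeneity-based) check you leave implicit.
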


A nonlinear connection on $M$ is a vector $1$-form $\Gamma$ on $TM$,
$C^{\infty}$ on $\T M$, $C^{0}$ on $TM$, such that $J \Gamma=J$ and $\Gamma J=-J .$
The horizontal and vertical projectors
 associated with $\Gamma$ are
defined by
   $h:=\frac{1}{2} (I+\Gamma)$ and $v:=\frac{1}{2}
 (I-\Gamma)$, respectively.   The torsion and  curvature of
$\Gamma$ are defined by $t:=\frac{1}{2}[J,\Gamma]$ and
 $\mathfrak{R}:=-\frac{1}{2}[h,h]$,  respectively. A nonlinear
 connection $\Gamma$ is homogenous if $[\mathcal{C}, \Gamma]=0$. It  is conservative
 if $d_{h}E=0$.

\begin{thm} \label{th.9a} {\em{\cite{r22}}} On a Finsler manifold $(M,L)$, there exists a unique
conservative homogenous nonlinear  connection  with zero torsion. It
is given by\,{\em:} \vspace{-0.3cm} $$\Gamma =
[J,G],\vspace{-0.3cm}$$ where $G$ is the canonical spray.\\
 This  nonlinear connection is called the canonical or the Barthel connection  associated with $(M,L)$.
\end{thm}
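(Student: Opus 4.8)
The plan is to split the argument into an existence part and a uniqueness part, carrying out both inside the Frölicher--Nijenhuis calculus of vector-valued forms on $TM$ and using repeatedly the three structural facts $[J,J]=0$, $[\mathcal{C},J]=-J$ (the standard properties of the almost-tangent structure $J$ and the canonical vertical field $\mathcal{C}$), together with the graded Jacobi identity for the bracket. Throughout, the nondegeneracy of $\Omega=dd_JE$ on $\T M$ (Proposition \ref{spray}) is what lets the symplectic relation $i_G\Omega=-dE$ pin down $G$ uniquely.

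For existence I would set $\Gamma:=[J,G]$ and verify the four defining properties in turn. First, that $\Gamma$ is a nonlinear connection: for any semispray $S$ one has $J[J,S]=J$ and $[J,S]J=-J$, a direct consequence of $JS=\mathcal{C}$ and of $J$ being the vertical endomorphism; applied to $S=G$ this gives $J\Gamma=J$ and $\Gamma J=-J$. Second, zero torsion: the graded Jacobi identity yields $[[J,J],G]=2[J,[J,G]]$, so $[J,J]=0$ forces $[J,[J,G]]=0$, whence $t=\tfrac12[J,\Gamma]=0$. Third, homogeneity: again by graded Jacobi, $[\mathcal{C},[J,G]]=[[\mathcal{C},J],G]+[J,[\mathcal{C},G]]=[-J,G]+[J,G]=0$, using $[\mathcal{C},J]=-J$ and $[\mathcal{C},G]=G$ (as $G$ is a spray). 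Fourth, conservativeness: here one must show that $i_G\Omega=-dE$ implies $d_hE=0$ for $h=\tfrac12(I+\Gamma)$; this is the one genuinely computational point, carried out by writing $d_hE=i_h\,dE$ and expressing everything through $\Omega=dd_JE$ and the splitting induced by $\Gamma$.

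For uniqueness, let $\Gamma'$ be any conservative homogeneous torsion-free nonlinear connection, with horizontal projector $h'$, and consider $S:=h'G$. Since $h'$ annihilates vertical vectors and $J\Gamma'=J$, $S$ is a semispray; since $\Gamma'$ is homogeneous and $[\mathcal{C},G]=G$, a short bracket computation gives $[\mathcal{C},S]=S$, so $S$ is a spray, horizontal for $\Gamma'$ by construction. The key lemma is that a torsion-free nonlinear connection coincides with $[J,\cdot]$ applied to any of its horizontal sprays: writing $B:=\Gamma'-[J,S]$, one checks that $B$ is semibasic ($JB=0=BJ$) and that zero torsion forces $[J,B]=0$, whence $B=0$. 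Thus $\Gamma'=[J,S]$. It then remains to show $S=G$: by the nondegeneracy of $\Omega$ it suffices to prove $i_S\Omega=-dE$, and this is precisely the reverse direction of the conservativeness computation above, now reading off $vG=0$ from $d_{h'}E=0$. Therefore $\Gamma'=[J,S]=[J,G]=\Gamma$.

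The main obstacle is the conservativeness step, which is really a single equivalence used twice: for a homogeneous torsion-free connection with horizontal spray $S$, one has $d_hE=0$ if and only if $i_S\Omega=-dE$. Establishing this cleanly requires the interior-product and Lie-derivative identities relating $J$, $d_J$, $\Omega$ and the horizontal--vertical splitting, and it is where the specific structure of the energy $E=\tfrac12L^2$ enters. The secondary technical point is the vanishing lemma for a semibasic vector $1$-form $B$ with $[J,B]=0$, which underlies the identification $\Gamma'=[J,S]$.
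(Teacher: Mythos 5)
The paper does not prove this theorem at all --- it is quoted verbatim from Grifone \cite{r22} --- so your attempt can only be measured against the cited source, whose strategy (existence via $\Gamma=[J,G]$ and Fr\"olicher--Nijenhuis identities, uniqueness via the associated spray) your outline does follow. Your existence half is essentially sound: $J[J,G]=J$, $[J,G]J=-J$, the torsion computation $2[J,[J,G]]=[[J,J],G]=0$, and the homogeneity computation via the graded Jacobi identity are all correct. But the uniqueness half contains a genuine error: the ``vanishing lemma'' you rely on --- that a semibasic vector $1$-form $B$ with $[J,B]=0$ must vanish --- is false. In induced coordinates, writing $B=B^i_j\,\partial/\partial y^i\otimes dx^j$, the only nontrivial component of $[J,B]$ is $[J,B](\partial/\partial x^j,\partial/\partial x^k)=\bigl(\partial B^i_k/\partial y^j-\partial B^i_j/\partial y^k\bigr)\partial/\partial y^i$, so $[J,B]=0$ merely says the fibre derivatives of $B$ are symmetric. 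Intrinsically, $B:=[J,f\mathcal{C}]$ for any function $f$ on $\T M$ is semibasic and satisfies $[J,B]=\tfrac12[[J,J],f\mathcal{C}]=0$ without vanishing; choosing $f$ positively homogeneous of degree $-1$ (e.g.\ $f=1/L$) even arranges $B(S)=0$, so that $\Gamma'+2B$ is a second torsion-free connection admitting the same horizontal spray $S$. Hence zero torsion alone cannot force $\Gamma'=[J,S]$, and your argument as written breaks exactly at the step ``zero torsion forces $[J,B]=0$, whence $B=0$.''

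The gap is repairable, and the repair is instructive: you never used the homogeneity of $\Gamma'$ in this lemma, yet it is precisely what kills the counterexample. Since $\Gamma'$ is homogeneous and $[\mathcal{C},[J,S]]=0$ (which you proved), one gets $[\mathcal{C},B]=0$, i.e.\ the $B^i_j$ are positively homogeneous of degree $1$; combined with the symmetry coming from $[J,B]=0$ and with $B(S)=0$ (both connections leave $S$ horizontal), Euler's identity yields $2B^i_j=\partial(B^i_ky^k)/\partial y^j=0$, so $B=0$. The second soft spot is the conservativeness equivalence $d_{h'}E=0\iff i_S\Omega=-dE$, which you invoke in both directions (once for existence, once to conclude $S=G$) but only announce as ``the one genuinely computational point.'' That computation is the analytic core of Grifone's proof --- it is where the Lagrangian structure $\Omega=dd_JE$, the homogeneity of $E$, and the nondegeneracy of $\Omega$ actually enter --- and without it neither half of your argument closes. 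So: existence is complete modulo that equivalence; uniqueness, as stated, would fail at the vanishing lemma but can be salvaged by feeding homogeneity into it as above.
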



\Section{Horizontal recurrent Finsler connection}

In this section, we prove that for each $\pi$-form $A$ there exists a unique horizontally recurrent Finsler connection whose $h$-recurrence form is $A$. Throughout, we use the notions and results of \cite{r92}.

\smallskip

\begin{defn}\label{sem.1} Let $D$ be a regular connection on
$\pi^{-1}(TM)$ with horizontal map $\beta$. The semispray  $S:=\beta \overline{\eta}$ will be called the semispray associated with $D$.
The nonlinear connection $\Gamma_D:=2\beta\circ\rho-I$ will
be called the nonlinear connection associated with $D$.
\end{defn}

\begin{lem}\label{eqv.} Let  ${D}$ be a regular connection
on $\pi^{-1}(TM)$ whose connection map is $K$ and whose horizontal
map is $\beta$. The (h)hv-torsion  ${T}$ of ${D}$ has the property that
    ${T}( \overline{X},\overline{\eta})=0$ if and only if the vector form  ${\Gamma} :=\beta\circ\rho - \gamma\circ K$ is a nonlinear
    connection on $M$. In this case ${\Gamma}$
    coincides with the nonlinear connection associated with $D$: ${\Gamma}=\Gamma_{D}=
    2\beta\circ\rho-I$ and, consequently, $h_{\Gamma}=h_{D}=\beta\circ\rho$, $\,v_{\Gamma}=v_{D}=\gamma\circ K$.
\end{lem}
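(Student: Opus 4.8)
The plan is to reduce the assertion ``$\Gamma$ is a nonlinear connection'' to its two defining algebraic conditions $J\Gamma = J$ and $\Gamma\circ J = -J$ (the required smoothness of $\Gamma$ is inherited from the regularity and smoothness of $D$), and then to test each against the structural identities of the pullback formalism. The identities I will use throughout are $\rho\circ\gamma = 0$ (exactness of the defining sequence), $\rho\circ\beta = I$ and $K\circ\beta = 0$ (since $\beta = (\rho|_{H(\T M)})^{-1}$ has image $H(\T M) = \ker K$), together with $J = \gamma\circ\rho$.

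First I would compute $J\Gamma$. Substituting $\Gamma = \beta\circ\rho - \gamma\circ K$ and using $\rho\circ\beta = I$ and $\rho\circ\gamma = 0$, the second term drops out and one gets $J\Gamma = \gamma\circ\rho = J$ with no hypothesis at all, so the first defining condition holds automatically. Next I would compute $\Gamma\circ J$: here $\beta\circ\rho\circ\gamma = 0$ kills the first term, leaving $\Gamma\circ J = -\gamma\circ(K\circ\gamma)\circ\rho$. As $\gamma$ is injective and $\rho$ surjective, this equals $-J = -\gamma\circ\rho$ exactly when $K\circ\gamma = I$ on $\mathfrak{X}(\pi (M))$. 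Thus the whole statement pivots on the identity $K\circ\gamma = I$, and the real task is to show this is equivalent to $T(\overline{X},\overline{\eta}) = 0$.

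This is the crux. Expanding the (h)hv-torsion through the classical torsion $\mathbf{T}(X,Y) = D_X\rho Y - D_Y\rho X - \rho[X,Y]$ and the same identities gives $T(\overline{X},\overline{Y}) = D_{\gamma\overline{X}}\overline{Y} - \rho[\gamma\overline{X},\beta\overline{Y}]$; evaluating at $\overline{Y} = \overline{\eta}$ turns the first term into $D_{\gamma\overline{X}}\overline{\eta} = K(\gamma\overline{X})$. The remaining term is $\rho[\gamma\overline{X}, S]$ with $S = \beta\overline{\eta}$ the associated semispray, and the key point is the identity $\rho[\gamma\overline{X}, S] = \overline{X}$, valid for \emph{any} semispray $S$. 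I would obtain it either by a short computation in local coordinates, or by specializing Proposition \ref{prop.1}(a) to $\overline{Y} = \overline{\eta}$ for the Cartan/Berwald pair and noting that $\rho[\gamma\overline{X},\,\cdot\,]$ depends only on $\rho S = \overline{\eta}$. This yields $T(\overline{X},\overline{\eta}) = K(\gamma\overline{X}) - \overline{X}$, whence $T(\,\cdot\,,\overline{\eta}) = 0 \iff K\circ\gamma = I$, completing the equivalence. I expect this bracket identity to be the only nonroutine step.

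Finally, under the hypothesis I would establish the decomposition identity $\beta\circ\rho + \gamma\circ K = I$: for $X = X_h + X_v$ in the regular splitting $T_u(\T M) = H_u(\T M)\oplus V_u(\T M)$ one always has $\beta\rho X = X_h$, while $\gamma K X = \gamma K X_v = X_v$ precisely because $K\circ\gamma = I$ (writing $X_v = \gamma\overline{Z}$). Consequently $\Gamma = \beta\circ\rho - \gamma\circ K = \beta\circ\rho - (I - \beta\circ\rho) = 2\beta\circ\rho - I = \Gamma_D$, and the projector formulas follow at once: $h_\Gamma = \tfrac12(I+\Gamma) = \beta\circ\rho$ and $v_\Gamma = \tfrac12(I-\Gamma) = I - \beta\circ\rho = \gamma\circ K$. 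Everything beyond the bracket identity is bookkeeping with the four structural relations listed above.
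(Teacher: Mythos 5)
Your proof is correct, and there is in fact nothing in the paper to compare it against: the authors state this lemma bare, as imported background (its proof is deferred to their earlier cited work), so your write-up supplies the missing argument, and it is the natural one. The reduction to the two algebraic axioms is sound: $J\Gamma=J$ holds identically from $\rho\circ\beta=I$ and $\rho\circ\gamma=0$, while $\Gamma\circ J=-J$ is equivalent, by injectivity of $\gamma$ and pointwise surjectivity of $\rho$, to $K\circ\gamma=I$ --- which is exactly the form in which the lemma is used later (in the uniqueness part of Theorem \ref{th.v1} the authors deduce $\overline{K}=\gamma^{-1}$ on $V(\T M)$), so your pivot identity captures the intended content. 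The crux $\rho[\gamma\overline{X},S]=\overline{X}$ for any semispray $S$ is correctly isolated and both of your justifications work: in induced coordinates $[X^j\partial_{y^j},\,y^i\partial_{x^i}-2G^i\partial_{y^i}]=X^i\partial_{x^i}+(\text{vertical})$, and $\rho$ kills the vertical part; alternatively, since $[\gamma\overline{X},\gamma\overline{W}]$ is vertical (integrability of the vertical distribution), $\rho[\gamma\overline{X},\cdot\,]$ is unchanged when $S$ is altered by a vertical field, which legitimizes transferring Proposition \ref{prop.1}(a) with $\overline{Y}=\overline{\eta}$ from the Cartan pair, where $\nabla_{\gamma\overline{X}}\overline{\eta}=\overline{X}$ and $T(\overline{X},\overline{\eta})=0$. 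The closing bookkeeping is also right: from $\ker\rho=V(\T M)=\operatorname{Im}\gamma$, $\ker K=H(\T M)$ and $K\circ\gamma=I$ you get $\beta\circ\rho+\gamma\circ K=I$, hence $\Gamma=2\beta\circ\rho-I=\Gamma_D$ and the projector formulas $h_\Gamma=\beta\circ\rho$, $v_\Gamma=\gamma\circ K$. The one point you assert rather than verify is the regularity clause in the definition of a nonlinear connection ($C^\infty$ on $\T M$, $C^0$ on $TM$): smoothness on $\T M$ is indeed immediate since $\beta$, $K$, $\gamma$, $\rho$ are smooth there, but continuity at the zero section is a genuine (if standard) technicality that the paper also glosses over; one sentence acknowledging this would make your argument airtight.
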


\begin{defn}\label{hor.rec.} Let the pullback bundle $\pi^{-1}(TM)$  be equipped  with a metric tensor $g$.
A regular connection $\overline{D}$ on $\pi^{-1}(TM)$ is said to be horizontally
recurrent if there exists a scalar 1-form $A$ on $\pi^{-1}(TM)$ such that
$$\overline{D}_{{\bar{\beta}} \overline{X}} g=A(\overline{X})g \quad \forall \,  \overline{X}\in  \cp,$$
where $\bar{\beta}$ is the horizontal map of $\overline{D}$. The scalar form $A$ is called the h-recurrence form of $\overline{D}$.
\end{defn}

Let $(M,L)$ be a Finsler manifold. Let $\nabla$ be the Cartan connection associated with $(M,L)$. We denote by $K,\beta,T$ the connection map, the horizontal map and the (h)hv-torsion of $\nabla$, respectively. We also denote by $R,P,S$ the h-, hv- and  v-curvature tensors of $\nabla$, respectively.
Now, we announce the main result of the present section.
\begin{thm} \label{th.v1}  Let $(M,L)$ be a Finsler manifold and $g$ the Finsler metric defined by $L$.
For each  scalar ($\pi$)1-form $A$, there exists a unique regular connection $\overline{D}$ on $\pi^{-1}(TM)$ such
that
\begin{description}
  \item[(C1)] $\overline{D}$ is horizontally recurrent with h-recurrence form $A$\,{\em:}  $(\overline{D}g) \circ \bar{\beta}=A \otimes g$,
  \item[(C2)] the metric $g$ is  $\overline{D}$-vertically parallel\,{\em:}  $\overline{D}_{\gamma \overline{X}} g=0$,
  \item[(C3)] the (h)h-torsion $\overline{Q}$ of $\overline{D}$ vanishes\,{\em:} $\overline{Q}=0$,
  \item[(C4)] the (h)hv-torsion $\overline{T}$ of $\overline{D}$ satisfies   $g(\overline{T}(\overline{X},\overline{Y}),\overline{Z}) =g(\overline{T}(\overline{X},\overline{Z}),\overline{Y})$.
\end{description}
Such a connection is called the horizontally recurrent Finsler (HRF-) connection with h-recurrence form $A$.
\end{thm}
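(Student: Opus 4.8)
The plan is to use the Cartan connection $\nabla$ as a fixed reference and to encode the unknown $\overline{D}$ by two difference $\pi$-tensor fields. Since $\nabla$ is regular, $T(\T M)=\gamma(\pi^{-1}(TM))\oplus\beta(\pi^{-1}(TM))$, so a candidate $\overline{D}$ is completely determined by
\[
\overline{D}_{\gamma\overline{X}}\overline{Y}=\nabla_{\gamma\overline{X}}\overline{Y}+U(\overline{X},\overline{Y}),\qquad
\overline{D}_{\beta\overline{X}}\overline{Y}=\nabla_{\beta\overline{X}}\overline{Y}+V(\overline{X},\overline{Y}),
\]
where $U,V\in\cp$-tensors are to be found. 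The recurrence condition (C1) is phrased through the horizontal map $\bar\beta$ of $\overline{D}$, which itself depends on the unknown $V$; this self-reference is the principal difficulty, and the strategy is to first dispose of the vertical part (where $\bar\beta$ does not intervene) and only then attack the horizontal part.

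First I would determine $U$ from (C2) and (C4). A direct computation of the $(h)hv$-torsion gives $\overline{T}=T+U$ (the $V$-part cancels because $\rho[\gamma\overline{X},\gamma\,\cdot\,]=0$ and $\rho[\gamma\overline{X},\beta\overline{Y}]=D^{\circ}_{\gamma\overline{X}}\overline{Y}$ by Proposition \ref{prop.1}(a)). Since $\nabla$ is vertically metric, (C2) becomes $g(U(\overline{X},\overline{Y}),\overline{Z})+g(U(\overline{X},\overline{Z}),\overline{Y})=0$, i.e. $U$ is $g$-skew in its last two slots; and since the Cartan tensor $\tau(\overline{X},\overline{Y},\overline{Z}):=g(T(\overline{X},\overline{Y}),\overline{Z})$ is totally symmetric, (C4) becomes $g(U(\overline{X},\overline{Y}),\overline{Z})=g(U(\overline{X},\overline{Z}),\overline{Y})$, i.e. $U$ is $g$-symmetric there. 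Hence $U=0$: the vertical covariant derivative of $\overline{D}$ coincides with that of $\nabla$ and $\overline{T}=T$. Using $\nabla_{\gamma\overline{X}}\overline{\eta}=\overline{X}$ one then finds $\overline{K}(\beta\overline{X})=V(\overline{X},\overline{\eta})$ and $\overline{K}\circ\gamma=\mathrm{id}$, so $\bar\beta\overline{X}=\beta\overline{X}-\gamma\,V(\overline{X},\overline{\eta})$; moreover $\overline{T}(\overline{X},\overline{\eta})=T(\overline{X},\overline{\eta})=0$, so Lemma \ref{eqv.} applies and $h_{\overline{D}}=\bar\beta\circ\rho$.

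It remains to find $V$ from (C1) and (C3). A convenient reduction is that $\bar\beta\overline{X}-\beta\overline{X}$ is vertical and $\overline{D}$ is vertically metric, so $(\overline{D}_{\bar\beta\overline{X}}g)=(\overline{D}_{\beta\overline{X}}g)$; thus (C1) may be read off Cartan's horizontal map and yields the symmetric equation
\[
g(V(\overline{X},\overline{Y}),\overline{Z})+g(V(\overline{X},\overline{Z}),\overline{Y})=-A(\overline{X})\,g(\overline{Y},\overline{Z}).
\]
Computing $\overline{Q}$ and using that the Cartan connection has vanishing $(h)h$-torsion together with Proposition \ref{prop.1}(a) turns (C3) into the antisymmetric equation
\[
V(\overline{X},\overline{Y})-V(\overline{Y},\overline{X})=T(V(\overline{X},\overline{\eta}),\overline{Y})-T(V(\overline{Y},\overline{\eta}),\overline{X}).
\]
The hard part is that these are coupled: the antisymmetric part of $V$ feeds back through its own contraction $V(\cdot,\overline{\eta})$, reflecting that $\overline{D}$ genuinely has $\bar\beta\neq\beta$. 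I would break the coupling by the Koszul cyclic-sum trick, which expresses $g(V(\overline{X},\overline{Y}),\overline{Z})$ as the Weyl-type term $\tfrac12\bigl(-A(\overline{X})g(\overline{Y},\overline{Z})-A(\overline{Y})g(\overline{X},\overline{Z})+A(\overline{Z})g(\overline{X},\overline{Y})\bigr)$ plus $\tau$-terms linear in $V(\cdot,\overline{\eta})$. Setting $\overline{Y}=\overline{\eta}$ and then $\overline{X}=\overline{\eta}$, and invoking the crucial identity $\tau(\,\cdot\,,\cdot\,,\overline{\eta})=0$, collapses the self-reference: the feedback $\tau$-terms vanish, pinning down first $V(\overline{\eta},\overline{\eta})$ and then $V(\cdot,\overline{\eta})$ explicitly, after which the cyclic formula gives $V$ in closed form. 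This simultaneously proves uniqueness.

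For existence I would run the argument in reverse: define $\overline{D}$ by $\overline{D}_{\gamma\overline{X}}\overline{Y}=\nabla_{\gamma\overline{X}}\overline{Y}$ and $\overline{D}_{\beta\overline{X}}\overline{Y}=\nabla_{\beta\overline{X}}\overline{Y}+V(\overline{X},\overline{Y})$ with the $V$ just obtained, check regularity (immediate since $\overline{K}\circ\gamma=\mathrm{id}$), and verify (C1)--(C4) by reversing the above computations. Finally, since $\bar\beta=\beta-\gamma\circ V(\cdot,\overline{\eta})$, the associated semispray $S=\bar\beta\,\overline{\eta}$ and nonlinear connection $\Gamma_{\overline{D}}=2\bar\beta\circ\rho-I$ can be written down explicitly in terms of $\nabla$, $A$ and the Cartan tensor, reducing to the Cartan and Barthel connections when $A=0$.
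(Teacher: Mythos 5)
Your proposal is correct, and it arrives at exactly the paper's connection: your closed-form $V$ coincides with the tensor $N$ of Corollary \ref{th.v3}, and your $V(\overline{X},\overline{\eta})$ is precisely $-\overline{X}_{t}$ with $\overline{X}_{t}$ as in (\ref{B1}). The route, however, is genuinely different in its bookkeeping. The paper never subtracts the Cartan connection at the outset: it derives six-term Koszul formulas for $2g(\overline{D}_{\gamma\overline{X}}\overline{Y},\overline{Z})$ and $2g(\overline{D}_{\bar{\beta}\overline{X}}\overline{Y},\overline{Z})$ directly from the axioms (equations (\ref{eq.t1}) and (\ref{eq.t3})) and then converts the bracket terms via Propositions \ref{prop.1} and \ref{prop.2} and Theorem 4 of \cite{r92}, which drags the Berwald connection $D^{\circ}$ through the whole horizontal computation ((\ref{eq.t4}), (\ref{tt2})) before it collapses to (\ref{33}). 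Your difference-tensor formulation $\overline{D}=\nabla+(U,V)$ makes the vertical step a two-line symmetry argument ((C2) forces $g$-skewness of $U$ in the last two slots, (C4) forces $g$-symmetry, hence $U=0$) where the paper needs (\ref{eq.t1}) plus Theorem 4(a) of \cite{r92}; and it eliminates $D^{\circ}$ entirely from the horizontal step, since after the observation $(\overline{D}_{\bar{\beta}\overline{X}}g)=(\overline{D}_{\beta\overline{X}}g)$ the axioms (C1), (C3) become a purely algebraic symmetric/antisymmetric system for $V$, whose only nonlocality is the feedback through $V(\cdot,\overline{\eta})$. The pivotal maneuver is then the same in both treatments: contract with $\overline{\eta}$ twice, using $i_{\overline{\eta}}T=0$ and the total symmetry of the Cartan tensor, exactly as the paper does to obtain (\ref{B1}). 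One small gloss in your wording: at the single-$\overline{\eta}$ stage the feedback terms do not all vanish — a residual term proportional to $T(V(\overline{\eta},\overline{\eta}),\cdot\,,\cdot)$ survives — but it involves only $V(\overline{\eta},\overline{\eta})$, which the double-$\overline{\eta}$ contraction has already determined, so there is no circularity and your scheme closes as claimed. What the paper's heavier route buys is the intermediate expression (\ref{22}), reused later in Propositions \ref{th.v2} and \ref{7.pp.7}; what yours buys is a shorter, bracket-free uniqueness argument and an existence check that is straightforward algebra rather than the paper's appeal to the results of \cite{r92}.
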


\begin{proof}
 First we prove the {\it \textbf{uniqueness}}. Since $\overline{D}$ is a regular connection, then, by Definition \ref{sem.1},
 its horizontal (vertical)  projector is given by  $\bar{h}:=\bar{\beta} \circ \rho$ ($\bar{v}:=I-\bar{\beta} \circ \rho $).
  On the other hand, from axioms  (C2) and (C4), taking into account Lemma 4 of \cite{r92},
 we deduce that $\overline{T}(\overline{X},\overline{\eta})=0$ for all  $\overline{X}\in \mathfrak{X}(\pi (M))$. Consequently, using
 Lemma \ref{eqv.},  it follows that   $\overline{K}=\gamma^{-1}$ on $V (T M)$ and the associated  nonlinear
connection $\Gamma_{\overline{D}}$ is given by
$\Gamma_{\overline{D}}=\bar{\beta}\circ\rho -\gamma\circ \overline{K}$, where $\overline{K}$  is the connection  map of $\overline{D}$.
Moreover,  the horizontal
and vertical projectors of $\Gamma_{\overline{D}}$ are given by $\bar{h}=\bar{\beta}
\circ \rho$ and $\bar{v}=\gamma \circ \overline{K}$, respectively.

 \par
In view of axioms (C2) and (C4), one can show that, for all  $\overline{X}, \overline{Y}, \overline{Z}\in \mathfrak{X}(\pi (M))$,
\begin{eqnarray}\label{eq.t1}
2g(\overline{D} _{\gamma \overline{X}} \overline{Y}, \overline{Z})&=&\gamma \overline{X}\cdot g(\overline{Y},\overline{Z})
 -g(\overline{Y}, \overline{T}(\overline{X},\overline{Z}))+g(\overline{Z}, \overline{T}(\overline{X},\overline{Y})) \nonumber \\
    &&+g(\overline{Y},\rho[\bar{\beta} \overline{Z}, \gamma \overline{X}])+g(\overline{Z},\rho[\gamma \overline{X},\bar{\beta} \overline{Y}]).
\end{eqnarray}
As the difference between $\bar{\beta} \overline{X}$ and ${\beta} \overline{X}$ is vertical, one can set
\begin{equation}\label{diff}
 \bar{\beta} \overline{X}={\beta} \overline{X}+\gamma \overline{X}_{t},
\end{equation}
for some $\overline{X}_{t} \in \cp$.  Since $J^{2}=0, \, [J,J]=0$ and $\rho \circ J=0$, we get
\begin{equation}\label{eq.t2}
  \rho[\gamma \overline{X},\bar{\beta}\, \overline{Y}]=\rho[\gamma \overline{X},{\beta}\, \overline{Y}].
\end{equation}
Making use of (\ref{eq.t2}) and Theorem 4\textbf{(a)} of \cite{r92}, Equation (\ref{eq.t1}) implies that \vspace{-0.2cm}
\begin{equation}\label{11}
 \overline{D} _{\gamma \overline{X}} \overline{Y}=\nabla  _{\gamma \overline{X}} \overline{Y}.\vspace{-0.1cm}
\end{equation}
\par
Similarly, using axioms  (C1) and (C3), we obtain,  for all  $\overline{X}, \overline{Y}, \overline{Z}\in \mathfrak{X}(\pi (M))$,
\begin{eqnarray}\label{eq.t3}
2g(\overline{D} _{\bar{\beta} \overline{X}} \overline{Y}, \overline{Z})&=&\bar{\beta} \overline{X}\cdot g(\overline{Y},\overline{Z})
+\bar{\beta} \overline{Y}\cdot g(\overline{Z},\overline{X})-\bar{\beta} \overline{Z}\cdot g(\overline{X},\overline{Y}) \nonumber\\
&&-g(\overline{X},\rho[\bar{\beta} \overline{Y},\bar{\beta} \overline{Z}])-g(\overline{Y},\rho[\bar{\beta} \overline{Z},\bar{\beta} \overline{X}])
+g(\overline{Z},\rho[\bar{\beta} \overline{X},\bar{\beta} \overline{Y}]) \nonumber\\
&&-A(\overline{X})g(\overline{Y},\overline{Z})-A(\overline{Y})g(\overline{Z},\overline{X})+A(\overline{Z})g(\overline{X},\overline{Y}).
\end{eqnarray}
From (\ref{diff}), it is easy to show that
\begin{equation*}
  \rho[\bar{\beta} \overline{X},\bar{\beta} \overline{Y}]=\rho[\beta \overline{X},{\beta} \overline{Y}]+
  \rho[\beta \overline{X},{\gamma} \overline{Y}_{t}]+\rho[\gamma \overline{X}_{t},{\beta} \overline{Y}],
\end{equation*}
Using the above relation, Proposition \ref{prop.1}, Proposition \ref{prop.2} and  Theorem 4\textbf{(b)} of \cite{r92}, (\ref{eq.t3})  becomes
\begin{eqnarray}\label{eq.t4}
2g(\overline{D} _{\bar{\beta} \overline{X}} \overline{Y}, \overline{Z})&=&2g(\nabla _{{\beta} \overline{X}} \overline{Y}, \overline{Z})
-A(\overline{X})g(\overline{Y},\overline{Z})-A(\overline{Y})g(\overline{Z},\overline{X})\nonumber\\
&&+A(\overline{Z})g(\overline{X},\overline{Y}) +2g(D^{\circ} _{{\gamma} \overline{X}_{t}} \overline{Y}, \overline{Z})+({D^{\circ}} _{\gamma \overline{X}_{t}}g)( \overline{Y}, \overline{Z})\nonumber\\
&&+({D^{\circ}} _{\gamma \overline{Y}_{t}}g)( \overline{Z}, \overline{X})-({D^{\circ}} _{\gamma \overline{Z}_{t}}g)( \overline{X}, \overline{Y})\nonumber\\
&=&2g(\nabla _{{\beta} \overline{X}} \overline{Y}, \overline{Z})
-A(\overline{X})g(\overline{Y},\overline{Z})-A(\overline{Y})g(\overline{Z},\overline{X})\nonumber\\
&&+A(\overline{Z})g(\overline{X},\overline{Y}) +2g(D^{\circ} _{{\gamma} \overline{X}_{t}} \overline{Y}, \overline{Z})+ 2\textbf{T}( \overline{X}_{t},\overline{Y}, \overline{Z})\nonumber\\
&& + 2T( \overline{Y}_{t},\overline{Z}, \overline{X})-2T( \overline{Z}_{t},\overline{X}, \overline{Y}).
\end{eqnarray}
Setting $\overline{X}=\overline{Y}=\overline{\eta}$ in (\ref{eq.t4}),  noting that $\bar{K} \circ \bar{\beta}=K \circ \beta=0$
and $i_{\overline{\eta}}T=0$, we get
\begin{equation*}\label{B}
2\overline{\eta}_{t}=2A(\overline{\eta})\overline{\eta}-L^{2}\overline{a},
\end{equation*}
where $g(\overline{a},\overline{X}):=A(\overline{X})$. From which,  by setting again $\overline{Y}=\overline{\eta}$ into Equation
 (\ref{eq.t4}), we obtain
 \begin{equation}\label{B1}
   \bar{\beta} \overline{X}-\beta \overline{X}=\gamma\overline{X}_{t}=\frac{1}{2}\{A(\overline{X})\gamma\overline{\eta}+A(\overline{\eta})\gamma\overline{X}-g(\overline{X},\overline{\eta})\, \gamma\overline{a} +L^{2}\gamma T(\overline{a},\overline{X})\}.
 \end{equation}
   From which, one can show that
   \begin{equation}\label{tt1}
   2T(\overline{X}_{t},\overline{Y}, \overline{Z})=A(\overline{\eta})T(\overline{X},\overline{Y}, \overline{Z})
   -g(\overline{X},\overline{\eta} )T(\overline{a},\overline{Y}, \overline{Z})+L^{2}T(
   T(\overline{a},\overline{X}),\overline{Y}, \overline{Z}),
   \end{equation}
    \begin{equation}\label{tt2}
      2{D^{\circ}} _{\gamma \overline{X}_{t}}\overline{Y}= A(\overline{X}){D^{\circ}} _{\gamma \overline{\eta}}\overline{Y}+
   A(\overline{\eta}){D^{\circ}} _{\gamma \overline{X}}\overline{Y}-g(\overline{X},\overline{\eta}){D^{\circ}} _{\gamma \overline{a}}\overline{Y}
   +L^{2}{D^{\circ}} _{\gamma T(\overline{a},\overline{X})}\overline{Y}.
   \end{equation}
Now, using (\ref{B1}), (\ref{tt1}) and (\ref{tt2}), Equation (\ref{eq.t4})  reduces to
 \begin{eqnarray}\label{22}
\overline{D} _{{\bar{\beta}} \overline{X}} \overline{Y}&=&\nabla _{{\beta} \overline{X}} \overline{Y}
+\frac{1}{2}\{A(\overline{\eta})T(\overline{X},\overline{Y})
-A(\overline{X})\overline{Y}-A(\overline{Y})\overline{X}+\overline{a}g(\overline{X},\overline{Y})\nonumber\\
&&+A(\overline{X}){D^{\circ}} _{\gamma \overline{\eta}}\overline{Y}+
   A(\overline{\eta}){D^{\circ}} _{\gamma \overline{X}}\overline{Y}-g(\overline{X},\overline{\eta}){D^{\circ}} _{\gamma \overline{a}}\overline{Y}
   +L^{2}{D^{\circ}} _{\gamma T(\overline{a},\overline{X})}\overline{Y}\nonumber\\
&&-L\ell(\overline{X})T(\overline{a},\overline{Y})-L\ell(\overline{Y})T(\overline{a},\overline{X})+
T(\overline{a},\overline{X},\overline{Y})\overline{\eta}
\\
&&+L^{2}[{T}(T(\overline{a},\overline{X}),\overline{Y})+{T}(T(\overline{a},\overline{Y}),\overline{X})
-{T}(T(\overline{X},\overline{Y}),\overline{a})]\} \nonumber.
\end{eqnarray}
Consequently, from (\ref{11}),  (\ref{22}) and taking into account (\ref{B1}), the full
expression of $\overline{D}_{X}\overline{Y}$ is given by:
\begin{eqnarray}\label{33}
\overline{D} _{X} \overline{Y}&=&\nabla _{X} \overline{Y}
+\frac{1}{2}\{g(\rho {X},\overline{Y})\overline{a}-A(\rho {X})\overline{Y}-A(\overline{Y})\rho {X}-L\ell(\overline{Y})T(\overline{a},\rho {X})\nonumber\\
&&+T(\overline{a},\rho {X},\overline{Y})\overline{\eta}+L^{2}[{T}(T(\overline{a},\overline{Y}),\rho {X})-{T}(T(\rho {X},\overline{Y}),\overline{a})]\}.
\end{eqnarray}
Hence ${\overline{D}}_{X}\overline{Y}$ is uniquely determined
by the right-hand side of (\ref{33}).

  \vspace{7pt}
\par Now, we prove the {\it
\textbf{existence}} of $\overline{D}$. For a given 1-scalar form $A$ on $\pi^{-1}(TM)$,  we define
$\overline{D}$ by the requirement that (\ref{33}) holds, or equivalently,   (\ref{11}), (\ref{22})  and (\ref{B1}) hold.
Then, using the properties of Cartan connection   $\nabla$ and the results of \cite{r92}, it is not difficult to show that the connection
$\overline{D}$ satisfies the conditions (C1)-(C4).
\end{proof}

\begin{rem} {\emph {If in the above theorem we take $A=0$, the connection  $\overline{D}$  reduces to the Cartan connection $\nabla$. Consequently, the above theorem  generalizes the existence and uniqueness theorem of Cartan connection \cite{r92}.}}
\end{rem}

\begin{cor}\label{th.v3} The  HRF-connection  $\overline{D}$ and  the Cartan connection $\nabla$ are related by
$$\overline{D} _{X} \overline{Y}=\nabla _{X} \overline{Y}+ N(\rho X, \overline{Y});$$
    \begin{eqnarray*}\label{B322}
 N(\rho X, \overline{Y})&:=&\frac{1}{2}\{g(\rho {X},\overline{Y})\overline{a}-A(\rho {X})\overline{Y}-A(\overline{Y})\rho {X}-L\ell(\overline{Y})T(\overline{a},\rho {X}) \nonumber\\
&&+T(\overline{a},\rho {X},\overline{Y})\overline{\eta}
+L^{2}[{T}(T(\overline{a},\overline{Y}),\rho {X})-{T}(T(\rho {X},\overline{Y}),\overline{a})]\}.
\end{eqnarray*}
\end{cor}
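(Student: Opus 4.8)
The plan is to read the formula directly off the uniqueness half of the proof of Theorem~\ref{th.v1}: equation~(\ref{33}) already asserts $\overline{D}_{X}\overline{Y}=\nabla_{X}\overline{Y}+\{\text{bracket}\}$, and that bracket is verbatim the displayed $N(\rho X,\overline{Y})$. So the corollary is a repackaging of~(\ref{33}), and the only genuine task beyond citing it is to justify the notation, i.e.\ to confirm that the correction term is a bona fide $\pi$-tensor depending on $X$ only through $\rho X$.

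First I would recall how~(\ref{33}) is assembled, so that the statement is self-contained. For vertical arguments, (\ref{11}) gives $\overline{D}_{\gamma\overline{X}}\overline{Y}=\nabla_{\gamma\overline{X}}\overline{Y}$; for horizontal arguments, (\ref{22}) expresses $\overline{D}_{\bar\beta\overline{X}}\overline{Y}$ as $\nabla_{\beta\overline{X}}\overline{Y}$ plus explicit terms in $A$, $T$ and $D^{\circ}$. To obtain one formula valid for an arbitrary $X\in\cpp$, I would use the splitting $X=\bar\beta\rho X+\gamma\overline{K}X$ supplied by Lemma~\ref{eqv.} (with $\bar h=\bar\beta\circ\rho$ and $\bar v=\gamma\circ\overline{K}$), apply (\ref{11}) to $\gamma\overline{K}X$ and (\ref{22}) to $\bar\beta\rho X$, and then use the Berwald--Cartan relations of Proposition~\ref{prop.1} together with~(\ref{B1}) to rewrite every $D^{\circ}_{\gamma(\cdot)}$ contribution in terms of $\nabla$ and $T$. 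The terms then regroup into the bracket of~(\ref{33}).

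Next I would check the two claims packed into the notation $N(\rho X,\overline{Y})$. Tensoriality is automatic: the difference $\overline{D}-\nabla$ of two linear connections is $\mathfrak{F}(TM)$-linear in $X$ and, after cancellation of the two Leibniz terms, $\mathfrak{F}(TM)$-linear in $\overline{Y}$, so $N$ is a genuine $(1,2)$ $\pi$-tensor rather than a pointwise expression. That $N$ factors through $\rho$ is the one explicit verification I would carry out: each summand in the bracket carries a factor linear in $\rho X$ --- namely $g(\rho X,\overline{Y})$, $A(\rho X)$, $\rho X$ itself, $T(\overline{a},\rho X)$, $T(\overline{a},\rho X,\overline{Y})$, and the two $L^{2}$-terms containing $\rho X$ --- so the whole bracket vanishes when $\rho X=0$, i.e.\ when $X$ is vertical. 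This is precisely consistent with~(\ref{11}) and legitimizes writing the correction as a function of $\rho X$.

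I do not anticipate a real obstacle, since all the analytic content already lives in~(\ref{33}). The only mild care is bookkeeping: collecting the many correction terms of~(\ref{22}) into the symmetric-looking bracket without sign errors, and confirming the vertical-vanishing that underlies the $\rho X$ notation. Both are routine given Lemma~\ref{eqv.}, Proposition~\ref{prop.1} and equations~(\ref{11}), (\ref{22}), (\ref{33}).
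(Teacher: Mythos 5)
Your proposal is correct and takes essentially the same route as the paper: the corollary is stated there without a separate proof precisely because it is a verbatim repackaging of equation~(\ref{33}) from the uniqueness part of Theorem~\ref{th.v1}, which is exactly what you cite (including the correct use of~(\ref{B1}) and Proposition~\ref{prop.1} to reconcile $\bar{\beta}$ with $\beta$ and absorb the $D^{\circ}_{\gamma(\cdot)}$ terms). Your extra verifications --- $\mathfrak{F}(TM)$-bilinearity of $\overline{D}-\nabla$ and the vanishing of the bracket on vertical $X$, consistent with~(\ref{11}) --- are sound and merely make explicit what the paper leaves implicit in the notation $N(\rho X,\overline{Y})$.
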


In view of Theorem \ref{th.v1}, we have
\begin{prop} \label{th.v2} ~ \par
  \begin{description}
  \item[(a)] The  spray $\overline{G}$  associated with  $\overline{D}$
 is related to the canonical spray $G$ by {\em:}
 \begin{equation*}
 \overline{G}=G+A(\overline{\eta}) \mathcal{C}-\frac{1}{2}L^{2}\gamma \overline{a}.
\end{equation*}

  \item[(b)]The nonlinear connection  $\overline{\Gamma}$  associated with  $\overline{D}$
 is related to the Barthel connection $\Gamma$ by {\em:}
  \begin{equation*}
   \overline{\Gamma}(X)= \Gamma(X) +\{A( \rho {X})\mathcal{C}+A(\overline{\eta})J {X}-g(\rho {X},\overline{\eta})\,\gamma \overline{a} +L^{2} \gamma T(\overline{a},\rho {X})\}.
 \end{equation*}
\end{description}

\end{prop}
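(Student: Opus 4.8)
The plan is to read off both identities directly from Definition \ref{sem.1} together with the basic relation (\ref{B1}) between the horizontal map $\bar{\beta}$ of $\overline{D}$ and the horizontal map $\beta$ of the Cartan connection $\nabla$. By Definition \ref{sem.1}, the spray associated with $\overline{D}$ is $\overline{G}=\bar{\beta}\,\overline{\eta}$ and the nonlinear connection associated with $\overline{D}$ is $\overline{\Gamma}=2\bar{\beta}\circ\rho-I$. On the other hand, for the Cartan connection $\nabla$ the associated semispray $\beta\overline{\eta}$ is precisely the canonical spray $G$ and the associated nonlinear connection $2\beta\circ\rho-I$ is precisely the Barthel connection $\Gamma$ (see \cite{r92}). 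Hence both parts reduce to evaluating the vertical difference $\bar{\beta}-\beta$ via (\ref{B1}).

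For part \textbf{(a)}, I would set $\overline{X}=\overline{\eta}$ in (\ref{B1}). Using $g(\overline{\eta},\overline{\eta})=L^{2}$ together with $i_{\overline{\eta}}T=0$ (so that $T(\overline{a},\overline{\eta})=0$), the four terms on the right-hand side collapse to
\begin{equation*}
\bar{\beta}\,\overline{\eta}-\beta\overline{\eta}=A(\overline{\eta})\,\gamma\overline{\eta}-\frac{1}{2}L^{2}\gamma\overline{a}.
\end{equation*}
Since $\gamma\overline{\eta}=\mathcal{C}$, $\overline{G}=\bar{\beta}\,\overline{\eta}$ and $G=\beta\overline{\eta}$, this is exactly the claimed formula $\overline{G}=G+A(\overline{\eta})\mathcal{C}-\frac{1}{2}L^{2}\gamma\overline{a}$.

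For part \textbf{(b)}, I would apply (\ref{B1}) with $\overline{X}=\rho X$ and compute $\overline{\Gamma}(X)-\Gamma(X)=2(\bar{\beta}-\beta)\rho X$. Substituting (\ref{B1}) gives
\begin{equation*}
\overline{\Gamma}(X)-\Gamma(X)=A(\rho X)\,\gamma\overline{\eta}+A(\overline{\eta})\,\gamma\rho X-g(\rho X,\overline{\eta})\,\gamma\overline{a}+L^{2}\gamma T(\overline{a},\rho X),
\end{equation*}
and recognizing $\gamma\overline{\eta}=\mathcal{C}$ and $\gamma\circ\rho=J$ turns the right-hand side into the stated expression for $\overline{\Gamma}(X)$.

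Both computations are essentially substitutions, so I do not anticipate a serious obstacle. The only points requiring care are the two standard identifications for the Cartan connection, $\beta\overline{\eta}=G$ and $2\beta\circ\rho-I=\Gamma$, which are borrowed from \cite{r92}, and the vanishing $T(\overline{a},\overline{\eta})=0$ used to simplify (\ref{B1}) in part \textbf{(a)}; once these are in place the result is immediate.
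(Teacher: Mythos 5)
Your proposal is correct and matches the paper's intended argument: the paper states the proposition with only the remark ``In view of Theorem \ref{th.v1}'', meaning precisely that both formulas are read off from Equation (\ref{B1}) together with Definition \ref{sem.1}, which is exactly what you do (setting $\overline{X}=\overline{\eta}$ and using $i_{\overline{\eta}}T=0$ for part (a), and composing with $\rho$ and using $\gamma\circ\rho=J$, $\gamma\overline{\eta}=\mathcal{C}$ for part (b)). The identifications $G=\beta\overline{\eta}$ and $\Gamma=2\beta\circ\rho-I$ for the Cartan connection are indeed the standard facts from \cite{r92} that the paper also relies on.
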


\begin{prop}\label{7.pp.7}
Let $\overline{S}$,  $\overline{P}$ and $\overline{R}$ be the v-, hv- and h-curvatures of HRF-connection  $\overline{D}$,
then we have~\footnote{$\mathfrak{U}_{X, Y}\{B(X,Y)\}:=B(X,Y)-B(Y,X)$.} \vspace{-0.1cm}
\begin{description}

  \item[(a)]${{\overline{S}}}(\overline{X},\overline{Y})\overline{Z}= S(\overline{X},\overline{Y})\overline{Z}$.

  \item[(b)]${{\overline{P}}}(\overline{X},\overline{Y})\overline{Z}= P(\overline{X},\overline{Y})\overline{Z}
  +  (\nabla_{\gamma \overline{Y}}N)(\overline{X},\overline{Z})+N({T}(\overline{Y},\overline{X}),\overline{Z})$\\
  ${\qquad\qquad\ \ \ }+\frac{1}{2}\{A(\overline{\eta})S(\overline{X},\overline{Y})\overline{Z}-L\ell(\overline{X})
  S(\overline{a},\overline{Y})\overline{Z}+L^{2}S(T(\overline{a},\overline{X}),\overline{Y})\overline{Z} \}$.

  \item[(c)]${{\overline{R}}}(\overline{X},\overline{Y})\overline{Z}= R(\overline{X},\overline{Y})\overline{Z}
  +P(\overline{X},\overline{Y}_{t})\overline{Z}-P(\overline{Y},\overline{X}_{t})\overline{Z}+S(\overline{X}_{t},\overline{Y}_{t})\overline{Z}$\\
    ${\qquad\qquad\ \ \ }
   +\mathfrak{U}_{\overline{X}, \overline{Y}}\{(\nabla_{\bar{\beta}\overline{Y}}N)(\overline{X},\overline{Z})
   +N(\overline{Y}, N(\overline{X}, \overline{Z}))+N(T(\overline{Y}_{t},\overline{X}),\overline{Z})\}$,\\
   where $\overline{Y}_t$ is given by (\ref{B1}).
\end{description}
\end{prop}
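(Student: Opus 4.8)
My plan rests on the observation that, by Corollary~\ref{th.v3}, the connection $\overline{D}$ differs from the Cartan connection $\nabla$ by the $\pi$-tensor $N$: for every $X\in\mathfrak{X}(TM)$ and $\overline{Y}\in\cp$ one has $\overline{D}_{X}\overline{Y}=\nabla_{X}\overline{Y}+N(\rho X,\overline{Y})$, with $N$ entering only through $\rho X$. Since $\rho\circ\gamma=0$ and $\rho\circ\bar\beta=\mathrm{id}$, the difference tensor annihilates the vertical directions, so $\overline{D}_{\gamma\overline{X}}\overline{Y}=\nabla_{\gamma\overline{X}}\overline{Y}$, whereas $\overline{D}_{\bar\beta\overline{X}}\overline{Y}=\nabla_{\bar\beta\overline{X}}\overline{Y}+N(\overline{X},\overline{Y})$. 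The whole proof then reduces to inserting this relation into the defining formulas $\overline{S}(\overline{X},\overline{Y})\overline{Z}=\overline{\mathbf{K}}(\gamma\overline{X},\gamma\overline{Y})\overline{Z}$, $\overline{P}(\overline{X},\overline{Y})\overline{Z}=\overline{\mathbf{K}}(\bar\beta\overline{X},\gamma\overline{Y})\overline{Z}$ and $\overline{R}(\overline{X},\overline{Y})\overline{Z}=\overline{\mathbf{K}}(\bar\beta\overline{X},\bar\beta\overline{Y})\overline{Z}$ and collecting terms.

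For (a) I would argue directly: both arguments of $\overline{S}$ are vertical, so each operator $\overline{D}_{\gamma(\cdot)}$ coincides with $\nabla_{\gamma(\cdot)}$; moreover $[\gamma\overline{X},\gamma\overline{Y}]$ is again vertical, hence $\rho[\gamma\overline{X},\gamma\overline{Y}]=0$ and $\overline{D}_{[\gamma\overline{X},\gamma\overline{Y}]}=\nabla_{[\gamma\overline{X},\gamma\overline{Y}]}$ as well. Every term of $\overline{\mathbf{K}}(\gamma\overline{X},\gamma\overline{Y})\overline{Z}$ then equals the corresponding term for $\nabla$, and $\overline{S}=S$ follows at once.

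For (b) and (c) the plan is to expand $\overline{\mathbf{K}}(W_{1},W_{2})\overline{Z}$ for the relevant lifts and split the output into two groups. The first group is the Cartan curvature evaluated at the shifted lifts: writing $\bar\beta\overline{X}=\beta\overline{X}+\gamma\overline{X}_{t}$ from~(\ref{B1}) and using that $\mathbf{K}$ is tensorial in its first two slots, $\mathbf{K}(\bar\beta\overline{X},\gamma\overline{Y})$ splits as $P(\overline{X},\overline{Y})+S(\overline{X}_{t},\overline{Y})$ in case (b), and $\mathbf{K}(\bar\beta\overline{X},\bar\beta\overline{Y})$ splits as $R(\overline{X},\overline{Y})+P(\overline{X},\overline{Y}_{t})-P(\overline{Y},\overline{X}_{t})+S(\overline{X}_{t},\overline{Y}_{t})$ in case (c). Substituting~(\ref{B1}) for $\overline{X}_{t}$ and invoking the identities $S(\overline{\eta},\cdot)=0$ and $L\ell(\overline{X})=g(\overline{X},\overline{\eta})$ turns $S(\overline{X}_{t},\overline{Y})$ into exactly the bracketed $S$-terms of (b). The second group gathers the derivatives of $N$: the mixed terms $N(\overline{X},\nabla_{\gamma\overline{Y}}\overline{Z})-\nabla_{\gamma\overline{Y}}(N(\overline{X},\overline{Z}))$ reorganize, through the covariant derivative of the $\pi$-tensor $N$, into $(\nabla_{\gamma\overline{Y}}N)(\overline{X},\overline{Z})$ together with a residual $N(\nabla_{\gamma\overline{Y}}\overline{X},\overline{Z})$, which combines with the bracket term $N(\rho[\bar\beta\overline{X},\gamma\overline{Y}],\overline{Z})$. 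Here Proposition~\ref{prop.1}\textbf{(a)}, $\rho[\gamma\overline{X},\beta\overline{Y}]=\nabla_{\gamma\overline{X}}\overline{Y}-T(\overline{X},\overline{Y})$, together with $\rho[\gamma\overline{X}_{t},\gamma\overline{Y}]=0$, reduces $\nabla_{\gamma\overline{Y}}\overline{X}+\rho[\bar\beta\overline{X},\gamma\overline{Y}]$ to the torsion $T(\overline{Y},\overline{X})$, producing the $N(T(\overline{Y},\overline{X}),\overline{Z})$ term of (b). In case (c) the same mechanism operates in both horizontal slots; the two bracket contributions are packaged through the torsion tensor $\mathbf{T}$ of $\nabla$, and using $Q=0$ (the Cartan connection is $(h)h$-torsion free), the automatic vanishing $\mathbf{T}(\gamma\overline{X},\gamma\overline{Y})=0$, and the definition $\mathbf{T}(\gamma\overline{X},\beta\overline{Y})=T(\overline{X},\overline{Y})$, one obtains $\mathbf{T}(\bar\beta\overline{X},\bar\beta\overline{Y})=T(\overline{X}_{t},\overline{Y})-T(\overline{Y}_{t},\overline{X})$, which yields the $N(T(\overline{Y}_{t},\overline{X}),\overline{Z})$ contribution, while the quadratic pieces $N(\cdot,N(\cdot,\cdot))$ give $N(\overline{Y},N(\overline{X},\overline{Z}))$. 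Antisymmetrizing in $\overline{X},\overline{Y}$ via $\mathfrak{U}_{\overline{X},\overline{Y}}$ then collects everything into the form stated in (c).

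The step I expect to be the main obstacle is the bookkeeping of the $N$-terms in (c): one must track precisely which argument of $N$ and of $\nabla$ is being differentiated, apply consistently $\rho\circ J=0$, the verticality of $[\gamma\overline{X},\gamma\overline{Y}]$ and Proposition~\ref{prop.1}\textbf{(a)}, and respect the curvature sign convention so that the derivative, torsion and quadratic contributions all enter the antisymmetrizer with the signs exhibited. Once these reductions are fixed the remaining algebra is routine, but with so many $N$-, $T$- and $S$-terms present it is easy to misplace a sign or swap an argument, so this is where I would be most careful.
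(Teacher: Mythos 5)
Your plan is sound, and in fact there is nothing in the paper to compare it against: Proposition \ref{7.pp.7} is stated there without any proof, so your computation supplies exactly the omitted argument. Part (a) as you give it is complete. For (b) and (c), your two-step decomposition --- split $\bar{\beta}\overline{X}=\beta\overline{X}+\gamma\overline{X}_{t}$ and use tensoriality of the classical curvature to produce $P(\overline{X},\overline{Y})+S(\overline{X}_{t},\overline{Y})$, resp.\ $R(\overline{X},\overline{Y})+P(\overline{X},\overline{Y}_{t})-P(\overline{Y},\overline{X}_{t})+S(\overline{X}_{t},\overline{Y}_{t})$, then push the difference tensor $N$ through the curvature expression, converting $\nabla(N(\cdot,\cdot))$ into $\nabla N$ plus residuals that recombine with the bracket terms via Proposition \ref{prop.1}\textbf{(a)} and the classical torsion $\mathbf{T}$ of $\nabla$ (with $Q=0$) --- is the natural route, and your auxiliary identities ($S(\overline{\eta},\cdot)=0$, $g(\overline{X},\overline{\eta})=L\ell(\overline{X})$, $\rho[\gamma\overline{X}_{t},\gamma\overline{Y}]=0$, $\mathbf{T}(\bar{\beta}\overline{X},\bar{\beta}\overline{Y})=T(\overline{X}_{t},\overline{Y})-T(\overline{Y}_{t},\overline{X})$) all check out.

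The one point you flagged, the curvature sign convention, is indeed the decisive one, and you should resolve it explicitly rather than leave it as a caution. With the common convention $\mathbf{K}(X,Y)=D_{X}D_{Y}-D_{Y}D_{X}-D_{[X,Y]}$ your reductions give $-(\nabla_{\gamma\overline{Y}}N)(\overline{X},\overline{Z})-N(T(\overline{Y},\overline{X}),\overline{Z})$ in (b), i.e.\ the opposite signs to the statement; the convention actually in force, inherited from \cite{r92}, is the reversed one. You can read this off from Proposition \ref{prop.1}\textbf{(b)}: under the standard convention one would get $K[\beta\overline{X},\gamma\overline{Y}]=\nabla_{\beta\overline{X}}\overline{Y}-\widehat{P}(\overline{X},\overline{Y})$, contradicting the quoted formula, so the curvature here is the negative of the standard commutator expression. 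Under that convention every $N$- and $\nabla N$-term in your difference flips sign and lands exactly on the statement, including the packaging $\mathfrak{U}_{\overline{X},\overline{Y}}\{N(T(\overline{Y}_{t},\overline{X}),\overline{Z})\}=-N(\mathbf{T}(\bar{\beta}\overline{X},\bar{\beta}\overline{Y}),\overline{Z})$ in (c). A good independent consistency check is the special case $A=\ell$: there $\overline{a}=L^{-1}\overline{\eta}$ kills the $T(\overline{a},\cdot)$ and $S(\overline{a},\cdot)$ terms, and inserting Lemma \ref{th.v33}\textbf{(d)} into (b) with the plus sign reproduces precisely the displayed formula for $\overline{P}$ in the proof of Theorem \ref{th.v3th}, confirming the signs as stated.
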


\section{Special HRF-connection}

In this section, we investigate a special horizontally recurrent Finsler connection for which the h-recurrence form $A$
is taken to be  $\ell:=L^{-1}i_{\overline{\eta}}\,g$.
In what follows $\overline{D}$ will denote the HRF-connection whose h-recurrence form is $\ell$, and will be called the special HRF-connection. In this case $\overline{a}=L^{-1} \, \overline{\eta}$.

\bigskip
The following two lemmas are useful for subsequence use.
\begin{lem} \label{th.v22}  The nonlinear connection  $\overline{\Gamma}$  associated with the special HRF-connection  $\overline{D}$
  is given by
  $  \overline{\Gamma}= \Gamma +L J.$ Consequently,
   ${\bar{\beta}}=\beta+\frac{1}{2}L \gamma$ and
 ${\overline{K}}=K-\frac{1}{2}L \rho$.
     \end{lem}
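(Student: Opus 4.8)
The plan is to read off the nonlinear connection $\overline{\Gamma}$ directly from Proposition \ref{th.v2}(b) by specializing to the recurrence form $A=\ell$. Recall that for the special HRF-connection we have $\overline{a}=L^{-1}\overline{\eta}$, and that $\ell(\overline{\eta})=L^{-1}g(\overline{\eta},\overline{\eta})=L^{-1}L^2=L$, since $g(\overline{\eta},\overline{\eta})=L^2$. First I would substitute $A=\ell$ and $\overline{a}=L^{-1}\overline{\eta}$ into the correction term
$$\{A(\rho X)\mathcal{C}+A(\overline{\eta})J X-g(\rho X,\overline{\eta})\,\gamma\overline{a}+L^2\gamma T(\overline{a},\rho X)\}.$$
The second term becomes $\ell(\overline{\eta})J X=L\,J X$. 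The crucial simplification is that the remaining three terms cancel in pairs: the first term $\ell(\rho X)\mathcal{C}=\ell(\rho X)\gamma\overline{\eta}$ must be shown to annihilate against $-g(\rho X,\overline{\eta})\gamma\overline{a}=-L^{-1}g(\rho X,\overline{\eta})\gamma\overline{\eta}$, and the last term $L^2\gamma T(\overline{a},\rho X)=L\,\gamma T(\overline{\eta},\rho X)$ must vanish.

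The main obstacle — really the only nontrivial point — is verifying these cancellations, and both follow from standard identities for the Cartan (h)hv-torsion $T$. For the last term, I would invoke $i_{\overline{\eta}}T=0$, i.e. $T(\overline{\eta},\rho X)=0$ (this homogeneity property of $T$ is precisely the fact already used in the proof of Theorem \ref{th.v1}), so $L\gamma T(\overline{\eta},\rho X)=0$. For the first and third terms, I observe that $\ell(\rho X)=L^{-1}g(\rho X,\overline{\eta})$ by the very definition $\ell=L^{-1}i_{\overline{\eta}}g$, whence
$$\ell(\rho X)\gamma\overline{\eta}-L^{-1}g(\rho X,\overline{\eta})\gamma\overline{\eta}=L^{-1}g(\rho X,\overline{\eta})\gamma\overline{\eta}-L^{-1}g(\rho X,\overline{\eta})\gamma\overline{\eta}=0.$$
Thus the entire bracket collapses to $L\,J X$, and since $J=\gamma\circ\rho$ acts as the canonical tangent structure, $\overline{\Gamma}(X)=\Gamma(X)+L\,J X$, i.e. $\overline{\Gamma}=\Gamma+L\,J$ as vector $1$-forms.

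Finally I would deduce the formulas for the horizontal and connection maps from $\overline{\Gamma}=\Gamma+LJ$. Writing $J=\gamma\circ\rho$ and using Lemma \ref{eqv.}, the horizontal projectors satisfy $\bar{h}=\frac{1}{2}(I+\overline{\Gamma})=h+\frac{1}{2}LJ=\beta\circ\rho+\frac{1}{2}L\gamma\circ\rho$. Since $\bar{h}=\bar{\beta}\circ\rho$ and $\rho\circ\bar{\beta}=\mathrm{id}$, this forces $\bar{\beta}=\beta+\frac{1}{2}L\gamma$ on the image, which I would confirm by applying both sides to an arbitrary $\pi$-vector field and checking $\rho\bar{\beta}\overline{X}=\overline{X}$ (using $\rho\gamma=0$). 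For the connection map, from $\bar{v}=I-\bar{h}$ and $\bar{v}=\gamma\circ\overline{K}$ together with $v=\gamma\circ K$, one gets $\gamma\overline{K}=\gamma K-\frac{1}{2}LJ=\gamma K-\frac{1}{2}L\gamma\rho$, and cancelling the injective $\gamma$ yields $\overline{K}=K-\frac{1}{2}L\rho$. This completes the proof; the computation is entirely routine once the bracket cancellation above is recorded.
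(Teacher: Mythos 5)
Your proposal is correct and follows essentially the same route as the paper: the paper's proof is exactly the specialization of Proposition \ref{th.v2}(b) to $A=\ell$, $\overline{a}=L^{-1}\overline{\eta}$, using $\ell(\overline{\eta})=L$ and $i_{\overline{\eta}}T=0$, with the formulas for $\bar{\beta}$ and $\overline{K}$ then read off from the projectors as you do. You have merely written out the pairwise cancellation $\ell(\rho X)\mathcal{C}=L^{-1}g(\rho X,\overline{\eta})\gamma\overline{\eta}=g(\rho X,\overline{\eta})\gamma\overline{a}$ and the injectivity/surjectivity bookkeeping that the paper leaves implicit.
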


\begin{proof} The proof follows from Proposition \ref{th.v2} and the identities  $\overline{a}=L^{-1} \overline{\eta}$,
$\ell(\overline{\eta})=L$ and $i_{\overline{\eta}}T=0$.
\end{proof}

\begin{lem}\label{th.v33} For the special HRF-connection  $\overline{D}$, we have
  \begin{description}
   \item[(a)] $N(\overline{X},\overline{Y})=\frac{1}{2}\{L^{-1}g(\overline{X},\overline{Y})
       \overline{\eta}-\ell(\overline{X})\overline{Y}-\ell(\overline{Y})\overline{X}\} $.
      \item[(b)] $N(\overline{X},\overline{\eta})=-\frac{1}{2}L \overline{X}$.
   \item[(c)]$(\nabla_{\beta \overline{X}} N)(\overline{Y},\overline{Z})=0$.
   \item[(d)] $(\nabla_{\gamma \overline{X}} N)(\overline{Y},\overline{Z})
   =\frac{1}{2L}\{g(\overline{Y},\overline{Z})\phi(\overline{X})
   -\hbar(\overline{X},\overline{Y})\overline{Z}-\hbar(\overline{X},\overline{Z})\overline{Y}\}$.
   \item[(e)] $(\nabla_{\bar{\beta} \overline{X}} N)(\overline{Y},\overline{Z})
   =\frac{1}{4}\{g(\overline{Y},\overline{Z})\phi(\overline{X})
   -\hbar(\overline{X},\overline{Y})\overline{Z}-\hbar(\overline{X},\overline{Z})\overline{Y}\}$,
 \end{description}
 where $\phi (\overline{X}):=\overline{X}-L^{-1}\ell(\overline{X}) \overline{\eta} $ and \,$\hbar(\overline{X},\overline{Y})=g(\phi(\overline{X}),\overline{Y})$ is the angular metric.
 \end{lem}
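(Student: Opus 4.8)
The plan is to prove Lemma \ref{th.v33} by direct substitution of the special-case data into the general formula for $N$ established in Corollary \ref{th.v3}, and then to compute the two covariant derivatives $(\nabla_{\gamma\overline{X}}N)$ and $(\nabla_{\bar\beta\overline{X}}N)$ using the metric properties of the Cartan connection. I would first establish part \textbf{(a)}: starting from the expression for $N(\rho X,\overline{Y})$ in Corollary \ref{th.v3}, I substitute $A=\ell$ and $\overline{a}=L^{-1}\overline{\eta}$. Since $T(\overline{\eta},\cdot)=i_{\overline{\eta}}T=0$, all three terms containing $T(\overline{a},\cdot)=L^{-1}T(\overline{\eta},\cdot)$ vanish, as does $T(T(\rho X,\overline{Y}),\overline{a})=L^{-1}T(T(\rho X,\overline{Y}),\overline{\eta})$ because $i_{\overline{\eta}}T=0$. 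What survives is exactly $\tfrac12\{L^{-1}g(\overline{X},\overline{Y})\overline{\eta}-\ell(\overline{X})\overline{Y}-\ell(\overline{Y})\overline{X}\}$, recovering \textbf{(a)}. Part \textbf{(b)} then follows by setting $\overline{Y}=\overline{\eta}$ in \textbf{(a)} and using $g(\overline{X},\overline{\eta})=L\ell(\overline{X})$, $\ell(\overline{\eta})=L$, which gives $\tfrac12\{\ell(\overline{X})\overline{\eta}-\ell(\overline{X})\overline{\eta}-L\overline{X}\}=-\tfrac12 L\overline{X}$.

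For the horizontal derivative \textbf{(c)}, I would use that $\nabla_{\beta\overline{X}}g=0$ and $\nabla_{\beta\overline{X}}\overline{\eta}=0$ (the latter because the Cartan connection map annihilates horizontal vectors, $K\circ\beta=0$). Applying $\nabla_{\beta\overline{X}}$ to the defining expression in \textbf{(a)} and distributing across the Leibniz rule, every factor $g$, $\overline{\eta}$, and $\ell=L^{-1}i_{\overline{\eta}}g$ is $\beta$-parallel; the only possibly nonzero contribution would come from $\beta\overline{X}\cdot(L^{-1})$, but since $d_hL=0$ (conservativeness of the Cartan/Barthel setting, equivalently $\beta\overline{X}\cdot L=dL(\beta\overline{X})=0$), the coefficient $L^{-1}$ is also horizontally constant. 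Hence every term differentiates to zero and $(\nabla_{\beta\overline{X}}N)=0$. The vertical derivative \textbf{(d)} is the computational heart: here I differentiate \textbf{(a)} along $\gamma\overline{X}$ using $\nabla_{\gamma\overline{X}}g=0$ (Cartan metricity), the vertical derivative $\nabla_{\gamma\overline{X}}\overline{\eta}=\overline{X}$, and $\gamma\overline{X}\cdot L=L^{-1}g(\overline{X},\overline{\eta})=\ell(\overline{X})$ so that $\gamma\overline{X}\cdot(L^{-1})=-L^{-2}\ell(\overline{X})$. I also need $(\nabla_{\gamma\overline{X}}\ell)(\overline{Y})$, which I would compute from $\ell(\overline{Y})=L^{-1}g(\overline{\eta},\overline{Y})$ via the Leibniz rule: $(\nabla_{\gamma\overline{X}}\ell)(\overline{Y})=-L^{-2}\ell(\overline{X})g(\overline{\eta},\overline{Y})+L^{-1}g(\overline{X},\overline{Y})=L^{-1}g(\phi(\overline{X}),\overline{Y})=L^{-1}\hbar(\overline{X},\overline{Y})$. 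Collecting all the pieces and repeatedly using $g(\overline{\eta},\overline{\eta})=L^2$ should, after the cancellations, assemble into the stated $\tfrac{1}{2L}\{g(\overline{Y},\overline{Z})\phi(\overline{X})-\hbar(\overline{X},\overline{Y})\overline{Z}-\hbar(\overline{X},\overline{Z})\overline{Y}\}$.

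Finally, for part \textbf{(e)} I would exploit the decomposition from Lemma \ref{th.v22}, namely $\bar\beta=\beta+\tfrac12 L\gamma$, which gives by linearity of the covariant derivative in its differentiating slot $(\nabla_{\bar\beta\overline{X}}N)=(\nabla_{\beta\overline{X}}N)+\tfrac12 L(\nabla_{\gamma\overline{X}}N)$. By \textbf{(c)} the first summand vanishes, and by \textbf{(d)} the second equals $\tfrac12 L\cdot\tfrac{1}{2L}\{\cdots\}=\tfrac14\{g(\overline{Y},\overline{Z})\phi(\overline{X})-\hbar(\overline{X},\overline{Y})\overline{Z}-\hbar(\overline{X},\overline{Z})\overline{Y}\}$, which is precisely the claim. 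I expect the main obstacle to be part \textbf{(d)}: the bookkeeping of the vertical derivatives of the three building blocks $g$, $\overline{\eta}$, and the scalar $L^{-1}$, together with correctly identifying $(\nabla_{\gamma\overline{X}}\ell)=L^{-1}\hbar(\overline{X},\cdot)$, must be carried out carefully so that the numerous terms collapse to the compact angular-metric form; once \textbf{(d)} is in hand, parts \textbf{(c)} and \textbf{(e)} are immediate via horizontal parallelism and the $\bar\beta$-splitting respectively.
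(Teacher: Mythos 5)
Your proposal is correct and follows essentially the same route as the paper: substitution of $A=\ell$, $\overline{a}=L^{-1}\overline{\eta}$ into Corollary \ref{th.v3} with the symmetric, indicatory $T$ killing all torsion terms, followed by the identities $\nabla g=0$, $\nabla_{\beta\overline{X}}L=0$, $\nabla_{\beta\overline{X}}\ell=0$, $\nabla_{\gamma\overline{X}}L=\ell(\overline{X})$, $(\nabla_{\gamma\overline{X}}\ell)(\overline{Y})=L^{-1}\hbar(\overline{X},\overline{Y})$, and the splitting $\bar{\beta}=\beta+\tfrac{1}{2}L\gamma$ of Lemma \ref{th.v22} for part \textbf{(e)}. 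You actually supply more detail than the paper's one-line proof (e.g., deriving $(\nabla_{\gamma\overline{X}}\ell)=L^{-1}\hbar(\overline{X},\cdot)$ from the Leibniz rule and $K\circ\gamma=\mathrm{id}$), and all steps check out.
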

\begin{proof} The proof follows from Corollary  \ref{th.v3} and fact that $T$ is symmetric and indicatory, taking the identities $\nabla g=0$,
 $\nabla_{\beta \overline{X}} L=0$,  $\nabla_{\beta \overline{X}}\ell=0 $, $\nabla_{\gamma \overline{X}} L=\ell(\overline{X})$ and  $(\nabla_{\gamma \overline{X}}\ell)(\overline{Y})=L^{-1}\hbar(\overline{X},\overline{Y})$ into account.
\end{proof}

\begin{thm}\label{th.v3th} The (v)hv-torsion tensor
 $\widehat{\overline{P}}$ of the special HRF-connection $\overline{D}$ never vanishes.
    \end{thm}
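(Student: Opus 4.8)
The plan is to read off an explicit expression for the contracted $hv$-curvature $\widehat{\overline{P}}(\overline{X},\overline{Y})=\overline{P}(\overline{X},\overline{Y})\overline{\eta}$ from Proposition \ref{7.pp.7}(b), specialized to the present case $A=\ell$, $\overline{a}=L^{-1}\overline{\eta}$, and then to exhibit a single pair of arguments at which it is provably nonzero on \emph{every} Finsler manifold. The efficient choice is to set $\overline{X}=\overline{\eta}$ and to contract the acted-on slot, i.e. $\overline{Z}=\overline{\eta}$; the idea is that this collapses every curvature-dependent contribution and leaves only a universal term built from $L$ and the angular operator $\phi$.

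Carrying out the substitution $\overline{X}=\overline{\eta}$, $\overline{Z}=\overline{\eta}$ in Proposition \ref{7.pp.7}(b), I expect the following simplifications. The three $S$-terms read $\tfrac{1}{2}\{L\,S(\overline{\eta},\overline{Y})\overline{\eta}-L\,\ell(\overline{\eta})\,S(L^{-1}\overline{\eta},\overline{Y})\overline{\eta}+L^{2}S(T(\overline{a},\overline{\eta}),\overline{Y})\overline{\eta}\}$; since $\ell(\overline{\eta})=L$ the first two cancel identically, and the third vanishes because $T$ is indicatory, so $T(\overline{a},\overline{\eta})=L^{-1}T(\overline{\eta},\overline{\eta})=0$. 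Likewise $N(T(\overline{Y},\overline{\eta}),\overline{\eta})=0$ since $T(\overline{Y},\overline{\eta})=0$. For the surviving $N$-term I would invoke Lemma \ref{th.v33}(d) to obtain
$$(\nabla_{\gamma\overline{Y}}N)(\overline{\eta},\overline{\eta})=\tfrac{1}{2L}\{g(\overline{\eta},\overline{\eta})\phi(\overline{Y})-2\hbar(\overline{Y},\overline{\eta})\overline{\eta}\}=\tfrac{1}{2}L\,\phi(\overline{Y}),$$
using $g(\overline{\eta},\overline{\eta})=L^{2}$ and $\hbar(\overline{Y},\overline{\eta})=g(\phi(\overline{Y}),\overline{\eta})=g(\overline{Y},\overline{\eta})-\ell(\overline{Y})\ell(\overline{\eta})=0$. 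Finally $P(\overline{\eta},\overline{Y})\overline{\eta}=\widehat{P}(\overline{\eta},\overline{Y})$, the $(v)hv$-torsion of the Cartan connection. I claim this vanishes: from Proposition \ref{prop.1}(b) with $\overline{Y}=\overline{\eta}$, together with $D^{\circ}_{\beta\overline{X}}\overline{\eta}=\nabla_{\beta\overline{X}}\overline{\eta}=0$ (the Berwald and Cartan connections share the horizontal map $\beta$), one gets $\widehat{P}(\overline{X},\overline{\eta})=0$; since $g(\widehat{P}(\overline{X},\overline{Y}),\overline{Z})$ is the totally symmetric Landsberg tensor (Proposition \ref{prop.2}(b)), this symmetry transfers the vanishing to the first slot, giving $\widehat{P}(\overline{\eta},\overline{Y})=0$.

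Assembling these pieces I arrive at
$$\widehat{\overline{P}}(\overline{\eta},\overline{Y})=\overline{P}(\overline{\eta},\overline{Y})\overline{\eta}=\tfrac{1}{2}L\,\phi(\overline{Y}),$$
a residue depending only on $L$. Since $\phi(\overline{Y})=\overline{Y}-L^{-1}\ell(\overline{Y})\overline{\eta}$ is the projection orthogonal to $\overline{\eta}$ and the angular metric $\hbar(\overline{X},\overline{Y})=g(\overline{X},\overline{Y})-\ell(\overline{X})\ell(\overline{Y})$ has rank $\dim M-1\ge 1$, there always exists $\overline{Y}$ with $\phi(\overline{Y})\neq 0$; for such $\overline{Y}$ one has $\widehat{\overline{P}}(\overline{\eta},\overline{Y})\neq 0$, so $\widehat{\overline{P}}$ cannot vanish identically. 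The one genuinely delicate input is the identity $\widehat{P}(\overline{\eta},\cdot)=0$ for the Cartan connection; this is the step I would document most carefully (through the full symmetry of the Landsberg tensor, cf. \cite{r92}), since everything else is bookkeeping that is arranged precisely so that the $L$-dependent term is left standing.
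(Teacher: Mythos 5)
Your proposal is correct and follows essentially the same route as the paper: the paper first records the contracted formula (its Equation (\ref{pp})) and then sets $\overline{X}=\overline{\eta}$, arriving at exactly your residue $\tfrac{L}{2}\phi(\overline{Y})$, whose nonvanishing it deduces from $Tr\,\phi=n-1$ just as you deduce it from the rank of the angular metric $\hbar$. The only minor differences are cosmetic: you contract with $\overline{\eta}$ in both slots from the outset and argue directly rather than by contradiction, and you justify the auxiliary fact $\widehat{P}(\overline{\eta},\cdot)=0$ via the total symmetry of the Landsberg tensor (correctly flagged as needing \cite{r92}), where the paper simply quotes that $\widehat{P}$ is indicatory.
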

\begin{proof}  From Proposition \ref{7.pp.7}, taking into account Lemma \ref{th.v22}, Lemma \ref{th.v33} and the fact that
 $T$ and $S$ are indicatory, one can show that
 \begin{eqnarray*}
   \overline{P}(\overline{X},\overline{Y})\overline{Z} &=& P(\overline{X},\overline{Y})\overline{Z}+\frac{1}{2}\{L S(\overline{X},\overline{Y})\overline{Z}
    +T(\overline{X},\overline{Y},\overline{Z})\frac{\overline{\eta}}{L}-\ell(\overline{Z})T(\overline{X},\overline{Y})\}\\
    & & -\frac{1}{2L}\{\hbar(\overline{Y},\overline{Z})\overline{X} + \hbar(\overline{Y},\overline{X})\overline{Z}-g(\overline{X},\overline{Z})\phi(\overline{Y})\}.
 \end{eqnarray*}
 From wihch
 \begin{equation}\label{pp}
   \widehat{\overline{P}}(\overline{X},\overline{Y})
   =\widehat{P}(\overline{X},\overline{Y})-\frac{1}{2}\{L T(\overline{X},\overline{Y})
       -\ell(\overline{X})\overline{Y}+L^{-1}\,g(\overline{X},\overline{Y})\overline{\eta}\}.
 \end{equation}
 \par  Now, assume  the contrary, that is $\widehat{\overline{P}}=0$.
  Hence, setting $\overline{X}=\overline{\eta}$ in (\ref{pp}), using the fact that $\widehat{P}$ and $T$ are indicatory, we obtain
 $$\frac{1}{2} L \, \phi(\overline{Y})=0 \quad \forall\, \overline{Y}\in\cp.$$
  Consequently, $(n-1)L=0$ (since $Tr \, \phi=n-1$), which is a contradiction.
\end{proof}

\begin{cor} In view of Equation \emph{(}\ref{pp}\emph{)}, we have
\begin{description}
  \item[(a)]$ \widehat{\overline{P}}(\overline{X},\overline{Y})-\widehat{\overline{P}}(\overline{Y},\overline{X})
   =\frac{1}{2}\{\ell(\overline{X})\overline{Y}-\ell(\overline{Y})\overline{X}\}.$
  \item[(b)] $g(\widehat{\overline{P}}(\overline{X},\overline{Y}),\overline{\eta})=-\frac{1}{2}\hbar(\overline{X},\overline{Y}).$
\end{description}
  \end{cor}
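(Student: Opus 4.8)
The plan is to derive both identities by substituting Equation (\ref{pp}) directly and exploiting the structural properties of the Cartan connection $\nabla$: the total symmetry and indicatory character of the Cartan tensor $T$ (so that $T(\overline{X},\overline{Y})=T(\overline{Y},\overline{X})$ and $g(T(\overline{X},\overline{Y}),\overline{\eta})=T(\overline{X},\overline{Y},\overline{\eta})=0$), the symmetry and indicatory character of the contracted hv-curvature $\widehat{P}$ (so that $\widehat{P}(\overline{X},\overline{Y})=\widehat{P}(\overline{Y},\overline{X})$ and $g(\widehat{P}(\overline{X},\overline{Y}),\overline{\eta})=0$), together with the metric identities $g(\overline{\eta},\overline{\eta})=L^{2}$ and $g(\overline{X},\overline{\eta})=L\ell(\overline{X})$ coming from $\ell=L^{-1}i_{\overline{\eta}}g$. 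No genuinely new input beyond (\ref{pp}) is needed; the whole corollary is a pair of one-line contractions of that formula.

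For part (a), I would antisymmetrize (\ref{pp}) in $\overline{X}$ and $\overline{Y}$. On the right-hand side the term $LT(\overline{X},\overline{Y})$ is symmetric (total symmetry of $T$) and the term $L^{-1}g(\overline{X},\overline{Y})\overline{\eta}$ is symmetric (symmetry of $g$), so both drop out of the difference $\widehat{\overline{P}}(\overline{X},\overline{Y})-\widehat{\overline{P}}(\overline{Y},\overline{X})$; likewise $\widehat{P}(\overline{X},\overline{Y})-\widehat{P}(\overline{Y},\overline{X})=0$ by the symmetry of $\widehat{P}$. The only surviving contribution is the one carrying $\ell(\overline{X})\overline{Y}$, whose antisymmetrization produces exactly $\tfrac{1}{2}\{\ell(\overline{X})\overline{Y}-\ell(\overline{Y})\overline{X}\}$, which is (a).

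For part (b), I would instead contract (\ref{pp}) with $\overline{\eta}$ in the metric, i.e.\ apply $g(\,\cdot\,,\overline{\eta})$. The two indicatory terms $g(\widehat{P}(\overline{X},\overline{Y}),\overline{\eta})$ and $g(T(\overline{X},\overline{Y}),\overline{\eta})$ vanish, and the remaining terms are evaluated using $g(\overline{Y},\overline{\eta})=L\ell(\overline{Y})$ and $g(\overline{\eta},\overline{\eta})=L^{2}$; collecting them gives a multiple of $g(\overline{X},\overline{Y})-\ell(\overline{X})\ell(\overline{Y})$, which is precisely $\hbar(\overline{X},\overline{Y})$ by the definition of the angular metric $\hbar(\overline{X},\overline{Y})=g(\phi(\overline{X}),\overline{Y})$. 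The only step demanding real care---and the place where I would double-check the normalization---is the bookkeeping of the $L$-factors in this last contraction, since it is there that the scalar coefficient of $\hbar$ is pinned down; everything else is a formal cancellation driven by the symmetry and indicatory properties quoted above.
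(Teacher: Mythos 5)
Your route is exactly the paper's route: the corollary is presented there as an immediate consequence of Equation (\ref{pp}), obtained by antisymmetrizing in $\overline{X},\overline{Y}$ for (a) and contracting with $\overline{\eta}$ for (b), using precisely the symmetry and indicatory properties of $T$ and $\widehat{P}$ that you invoke. Your part (a) is complete and correct: the terms $L\,T(\overline{X},\overline{Y})$ and $L^{-1}g(\overline{X},\overline{Y})\overline{\eta}$ are symmetric and drop out, $\widehat{P}$ cancels by its symmetry, and the surviving term gives $\frac{1}{2}\{\ell(\overline{X})\overline{Y}-\ell(\overline{Y})\overline{X}\}$ as stated.

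For (b), however, the $L$-bookkeeping you explicitly deferred is not a formality, and your instinct to double-check the normalization there was well placed. Contracting (\ref{pp}) with $\overline{\eta}$, using $g(\widehat{P}(\overline{X},\overline{Y}),\overline{\eta})=0$, $g(T(\overline{X},\overline{Y}),\overline{\eta})=0$, $g(\overline{Y},\overline{\eta})=L\ell(\overline{Y})$ and $g(\overline{\eta},\overline{\eta})=L^{2}$, one finds
\begin{equation*}
g(\widehat{\overline{P}}(\overline{X},\overline{Y}),\overline{\eta})
=\frac{1}{2}\,\ell(\overline{X})\,L\ell(\overline{Y})
-\frac{1}{2}\,L^{-1}g(\overline{X},\overline{Y})\,L^{2}
=-\frac{L}{2}\bigl\{g(\overline{X},\overline{Y})-\ell(\overline{X})\ell(\overline{Y})\bigr\}
=-\frac{L}{2}\,\hbar(\overline{X},\overline{Y}),
\end{equation*}
so the ``multiple'' you left undetermined is $-\frac{L}{2}$, not the printed $-\frac{1}{2}$. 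A homogeneity count confirms this: the left-hand side is positively homogeneous of degree $1$ in the directional argument, while $\hbar$ is of degree $0$, so a factor of $L$ is forced; the coefficient $-\frac{1}{2}$ in the paper's item (b) appears to be a misprint. The upshot is that your method, carried to completion, proves a corrected version of (b); but as written your proposal does not actually verify the statement as printed --- and no correct computation can, since it fails unless $L=1$. To close the argument you should carry out the contraction explicitly and either record the coefficient $-\frac{L}{2}$ or note the discrepancy with the stated formula.
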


\begin{prop}\label{th.v3bb} The h-curvature tensor
 ${\overline{R}}$ of the special HRF-connection $\overline{D}$ has the form
 \begin{eqnarray*}
   \overline{R}(\overline{X},\overline{Y})\overline{Z} &=& R(\overline{X},\overline{Y})\overline{Z}+\frac{1}{4}L^{2} S(\overline{X},\overline{Y})\overline{Z}
  +\frac{1}{2}L\{P(\overline{X},\overline{Y})\overline{Z}-P(\overline{Y},\overline{X})\overline{Z}\}\\
    & & +\frac{1}{4}\{g(\overline{X},\overline{Z})\overline{Y} - g(\overline{Y},\overline{Z})\overline{X}\}.
 \end{eqnarray*}
  \end{prop}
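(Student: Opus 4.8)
The plan is to specialize the general h-curvature formula of Proposition \ref{7.pp.7}\textbf{(c)} to the case $A=\ell$, $\overline{a}=L^{-1}\overline{\eta}$, and to simplify each resulting term using the explicit description of the deformation tensor $N$ furnished by Lemma \ref{th.v33}. The first step is to record the vertical shift $\overline{X}_t$ for the special connection. Substituting $A=\ell$ and $\overline{a}=L^{-1}\overline{\eta}$ into (\ref{B1}) and using $\ell(\overline{\eta})=L$, $g(\overline{X},\overline{\eta})=L\ell(\overline{X})$ together with $i_{\overline{\eta}}T=0$, one finds $\overline{X}_t=\tfrac{1}{2}L\,\overline{X}$, in agreement with Lemma \ref{th.v22}. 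This single identity carries most of the simplification.

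With $\overline{X}_t=\tfrac{1}{2}L\,\overline{X}$, the two ``curvature'' contributions in Proposition \ref{7.pp.7}\textbf{(c)} are immediate: $P(\overline{X},\overline{Y}_t)\overline{Z}-P(\overline{Y},\overline{X}_t)\overline{Z}$ becomes $\tfrac{1}{2}L\{P(\overline{X},\overline{Y})\overline{Z}-P(\overline{Y},\overline{X})\overline{Z}\}$ and $S(\overline{X}_t,\overline{Y}_t)\overline{Z}$ becomes $\tfrac{1}{4}L^{2}S(\overline{X},\overline{Y})\overline{Z}$, which are exactly the second and third terms of the asserted formula. Next I would dispose of the term $N(T(\overline{Y}_t,\overline{X}),\overline{Z})$ inside the alternation $\mathfrak{U}_{\overline{X},\overline{Y}}$: since $\overline{Y}_t=\tfrac{1}{2}L\,\overline{Y}$ gives $T(\overline{Y}_t,\overline{X})=\tfrac{1}{2}L\,T(\overline{Y},\overline{X})$ and $T$ is symmetric, this expression is symmetric in $\overline{X},\overline{Y}$ and is therefore annihilated by $\mathfrak{U}_{\overline{X},\overline{Y}}$. (Here one also uses $\ell(T(\overline{Y}_t,\overline{X}))=0$, which follows from $T$ being totally symmetric and indicatory.)

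This reduces the proof to computing the alternation of the two remaining terms, $(\nabla_{\bar{\beta}\overline{Y}}N)(\overline{X},\overline{Z})$ and $N(\overline{Y},N(\overline{X},\overline{Z}))$, and this is where the bulk of the work lies. For the first I would insert the closed form of Lemma \ref{th.v33}\textbf{(e)}; for the second I would iterate Lemma \ref{th.v33}\textbf{(a)}, computing the auxiliary scalars $g(\overline{Y},N(\overline{X},\overline{Z}))$ and $\ell(N(\overline{X},\overline{Z}))$ by means of $g(\overline{X},\overline{\eta})=L\ell(\overline{X})$ and $g(\overline{\eta},\overline{\eta})=L^{2}$. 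The main obstacle is the bookkeeping after antisymmetrization: each of these two expressions separately produces $\overline{\eta}$-valued terms and terms proportional to $\ell(\,\cdot\,)\ell(\,\cdot\,)$, none of which occur in the target. The decisive point is that, writing $\hbar(\overline{X},\overline{Y})=g(\overline{X},\overline{Y})-\ell(\overline{X})\ell(\overline{Y})$ and $\phi(\overline{X})=\overline{X}-L^{-1}\ell(\overline{X})\overline{\eta}$, the $\overline{\eta}$-terms arising from $(\nabla_{\bar{\beta}\overline{Y}}N)(\overline{X},\overline{Z})$ cancel exactly against those from $N(\overline{Y},N(\overline{X},\overline{Z}))$, while the surplus $\ell\otimes\ell$ terms cancel the extra pieces of the pure metric terms; what remains is precisely $\tfrac{1}{4}\{g(\overline{X},\overline{Z})\overline{Y}-g(\overline{Y},\overline{Z})\overline{X}\}$. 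Collecting the four surviving contributions $R(\overline{X},\overline{Y})\overline{Z}$, $\tfrac{1}{4}L^{2}S(\overline{X},\overline{Y})\overline{Z}$, $\tfrac{1}{2}L\{P(\overline{X},\overline{Y})\overline{Z}-P(\overline{Y},\overline{X})\overline{Z}\}$ and $\tfrac{1}{4}\{g(\overline{X},\overline{Z})\overline{Y}-g(\overline{Y},\overline{Z})\overline{X}\}$ then yields the stated expression for $\overline{R}$.
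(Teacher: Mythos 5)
Your proposal is correct and takes essentially the same route as the paper: the paper's proof of this proposition consists precisely of the citation ``Proposition \ref{7.pp.7}, Lemma \ref{th.v22} and Lemma \ref{th.v33}'', which is exactly the combination you use, with $\overline{X}_t=\tfrac{1}{2}L\overline{X}$ doing the main work. The cancellations you assert (the $\overline{\eta}$-valued terms between $(\nabla_{\bar{\beta}\overline{Y}}N)(\overline{X},\overline{Z})$ and $N(\overline{Y},N(\overline{X},\overline{Z}))$ after antisymmetrization, and the $\ell\otimes\ell$ terms against the expansion of $\phi$ and $\hbar$) do check out, so your write-up simply supplies the computation the paper leaves implicit.
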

\begin{proof} The proof follows from Proposition \ref{7.pp.7}, Lemma \ref{th.v22} and Lemma \ref{th.v33}.
\end{proof}

\begin{defn}\label{def.2}{\em{\cite{r86}}} A Finsler manifold $(M,L)$, with $dim\, M\geq 3$,
is said to be:
\begin{description}
  \item[(a)] $h$-isotropic with  a scalar
$k_{o}$ if  the h-curvature tensor $R$  of Cartan connection has the form
$R(\overline{X},\overline{Y})\overline{Z}=k_{o} \{g(\overline{X},\overline{Z})
\overline{Y}-g(\overline{Y},\overline{Z})\overline{X} \}.$
  \item[(b)] of constant curvature $k$ if the (v)h-torsion tensor $\widehat{R}$ of Cartan connection satisfies
the relation
 $\widehat{R}(\overline{\eta},\overline{X})= k L^{2} \phi(\overline{X}).$
  \end{description}
\end{defn}

\begin{defn}\label{def.3}{\em{\cite{r86}}} A Finsler manifold $(M,L)$ is said to be P-symmetric if the mixed curvature  tensor $P$ of Cartan connection satisfies
$P(\overline{X},\overline{Y})\overline{Z}
 =P(\overline{Y},\overline{X})\overline{Z}$, for all  $\overline{X},\overline{Y}, \overline{Z} \in\cp$.
\end{defn}

\begin{thm} Let $(M,L)$ be an h-isotropic Finsler manifold with scalar $(-\frac{1}{4})$. The h-curvature tensor
 ${\overline{R}}$ of the special HRF-connection $\overline{D}$  vanishes if and only  if the v-curvature tensor $S$ of Cartan connection has property that
  $\nabla_{\beta \overline{\eta}} \,S=\frac{L}{2}S$.
    \end{thm}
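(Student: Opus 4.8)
The plan is to read off $\overline{R}$ from Proposition \ref{th.v3bb} and exploit the $h$-isotropy hypothesis to produce a cancellation, after which the whole statement collapses to a single Bianchi-type identity. First I would substitute the hypothesis $k_{o}=-\frac14$ into Definition \ref{def.2}(a), so that the Cartan $h$-curvature becomes $R(\overline{X},\overline{Y})\overline{Z}=-\frac14\{g(\overline{X},\overline{Z})\overline{Y}-g(\overline{Y},\overline{Z})\overline{X}\}$. Plugging this into the formula of Proposition \ref{th.v3bb}, the contribution of $R$ is exactly the negative of the closing term $+\frac14\{g(\overline{X},\overline{Z})\overline{Y}-g(\overline{Y},\overline{Z})\overline{X}\}$, so the two cancel and I am left with
$$\overline{R}(\overline{X},\overline{Y})\overline{Z}=\tfrac14 L^{2}S(\overline{X},\overline{Y})\overline{Z}+\tfrac12 L\{P(\overline{X},\overline{Y})\overline{Z}-P(\overline{Y},\overline{X})\overline{Z}\}.$$

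The crucial step, which I expect to be the main obstacle, is to eliminate the alternated $hv$-curvature in favour of a derivative of $S$. For the Cartan connection one has the Bianchi-type commutation identity
$$\mathfrak{U}_{\overline{X},\overline{Y}}\{P(\overline{X},\overline{Y})\overline{Z}\}=-(\nabla_{\beta\overline{\eta}}S)(\overline{X},\overline{Y})\overline{Z},$$
i.e.\ the alternation of $P$ over its horizontal and vertical arguments coincides, up to sign, with the $h$-covariant derivative of the $v$-curvature $S$ along the canonical section $\overline{\eta}$; this is the invariant form of the classical relation $S^{i}_{jkl|0}=P^{i}_{jlk}-P^{i}_{jkl}$, available from the Bianchi identities of \cite{r92}. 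The sign is pinned down by consistency with the desired statement, and establishing (or citing precisely) this identity is the technical heart of the argument. In verifying it I would check that no residual torsion terms survive the contraction, which is exactly where I expect to use that $T$ and $S$ are indicatory, so that every term contracted against $\overline{\eta}$ drops out.

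Granting the identity, I substitute it into the displayed expression to obtain
$$\overline{R}(\overline{X},\overline{Y})\overline{Z}=\tfrac14 L^{2}S(\overline{X},\overline{Y})\overline{Z}-\tfrac12 L\,(\nabla_{\beta\overline{\eta}}S)(\overline{X},\overline{Y})\overline{Z}=\tfrac12 L\left(\tfrac{L}{2}S-\nabla_{\beta\overline{\eta}}S\right)(\overline{X},\overline{Y})\overline{Z}.$$
Finally, since $L$ never vanishes on $\T M$, the scalar factor $\tfrac12 L$ is invertible, whence $\overline{R}=0$ holds if and only if $\tfrac{L}{2}S-\nabla_{\beta\overline{\eta}}S=0$, that is, $\nabla_{\beta\overline{\eta}}S=\frac{L}{2}S$. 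This yields both implications simultaneously and completes the proof, the only nonroutine ingredient being the Bianchi identity isolated above.
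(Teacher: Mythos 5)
Your proposal is correct and takes essentially the same route as the paper: the paper's proof likewise cancels $R$ against the closing term $\frac{1}{4}\{g(\overline{X},\overline{Z})\overline{Y}-g(\overline{Y},\overline{Z})\overline{X}\}$ of Proposition \ref{th.v3bb} using h-isotropy with $k_{o}=-\frac{1}{4}$, and the commutation identity you isolate as the technical heart is exactly the paper's Relation (\ref{pp2}), $P(\overline{X},\overline{Y})\overline{Z}-P(\overline{Y},\overline{X})\overline{Z}=-(\nabla_{\beta \overline{\eta}}\,S)(\overline{X},\overline{Y},\overline{Z})$, which the paper does not prove but cites from \cite{r96} (not \cite{r92}), with precisely the sign your argument requires. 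Your final factorization $\overline{R}=\frac{1}{2}L\bigl(\frac{L}{2}S-\nabla_{\beta\overline{\eta}}S\bigr)$ and division by the nonvanishing scalar $L$ is the same (implicit) conclusion as in the paper.
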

\begin{proof} The proof follows from proposition  \ref{th.v3bb}, Lemma \ref{th.v22} and the fact that \cite{r96}
\begin{equation}\label{pp2}
  P(\overline{X},\overline{Y})\overline{Z}-P(\overline{Y},\overline{X})\overline{Z}=-(\nabla_{\beta \overline{\eta}} \,S)(\overline{X},\overline{Y},\overline{Z}).
\end{equation}
\end{proof}

\begin{thm} Let $(M,L)$ be a $P$-symmetric h-isotropic  Finsler manifold with scalar $(-\frac{1}{4})$. The h-curvature tensor
 ${\overline{R}}$ of the special HRF-connection $\overline{D}$  vanishes if and only  if the v-curvature tensor $S$ vanishes.
\end{thm}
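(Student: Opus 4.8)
The plan is to substitute the two structural hypotheses directly into the explicit formula for $\overline{R}$ furnished by Proposition \ref{th.v3bb} and watch the curvature-like terms collapse. Recall that the proposition gives
\begin{eqnarray*}
\overline{R}(\overline{X},\overline{Y})\overline{Z} &=& R(\overline{X},\overline{Y})\overline{Z}+\frac{1}{4}L^{2} S(\overline{X},\overline{Y})\overline{Z}
+\frac{1}{2}L\{P(\overline{X},\overline{Y})\overline{Z}-P(\overline{Y},\overline{X})\overline{Z}\}\\
&& +\frac{1}{4}\{g(\overline{X},\overline{Z})\overline{Y} - g(\overline{Y},\overline{Z})\overline{X}\}.
\end{eqnarray*}

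First I would invoke the $P$-symmetry hypothesis (Definition \ref{def.3}): since $P(\overline{X},\overline{Y})\overline{Z}=P(\overline{Y},\overline{X})\overline{Z}$, the entire middle term $\frac{1}{2}L\{P(\overline{X},\overline{Y})\overline{Z}-P(\overline{Y},\overline{X})\overline{Z}\}$ vanishes identically. Next I would use $h$-isotropy with scalar $-\frac{1}{4}$ (Definition \ref{def.2}(a)), which asserts that the Cartan $h$-curvature has the form
$$R(\overline{X},\overline{Y})\overline{Z}=-\frac{1}{4}\{g(\overline{X},\overline{Z})\overline{Y}-g(\overline{Y},\overline{Z})\overline{X}\}.$$
Substituting this in, the term $-\frac{1}{4}\{g(\overline{X},\overline{Z})\overline{Y}-g(\overline{Y},\overline{Z})\overline{X}\}$ coming from $R$ cancels exactly against the explicit final term $+\frac{1}{4}\{g(\overline{X},\overline{Z})\overline{Y}-g(\overline{Y},\overline{Z})\overline{X}\}$, leaving
$$\overline{R}(\overline{X},\overline{Y})\overline{Z}=\frac{1}{4}L^{2}\,S(\overline{X},\overline{Y})\overline{Z}.$$

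The conclusion then follows at once: since $L>0$ everywhere on the slit tangent bundle $\T M$, the scalar factor $\frac{1}{4}L^{2}$ never vanishes, so $\overline{R}=0$ if and only if $S=0$. There is essentially no obstacle in this argument; the only point demanding attention is verifying that the two explicit curvature-like terms appear with opposite signs and equal coefficient $\frac{1}{4}$, so that they cancel precisely. This is exactly what the specific value $-\frac{1}{4}$ is engineered to guarantee, and it simultaneously explains why that value features in the hypothesis: it is the unique scalar for which the Cartan $h$-curvature annihilates the angular-metric correction term produced in Proposition \ref{th.v3bb}, isolating the $v$-curvature $S$ as the sole surviving contribution to $\overline{R}$.
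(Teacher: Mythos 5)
Your proposal is correct and follows essentially the same route as the paper: substitute the hypotheses into Proposition \ref{th.v3bb} so that the antisymmetrized $P$-term and the $\frac{1}{4}$-terms drop out, leaving $\overline{R}=\frac{1}{4}L^{2}S$, whence $\overline{R}=0$ iff $S=0$ since $L>0$ on $\T M$. The only cosmetic difference is that the paper routes the vanishing of the $P$-term through Relation (\ref{pp2}), whereas you use $P$-symmetry directly, which is equivalent.
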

\begin{proof} The proof follows from Proposition \ref{th.v3bb} and Relation (\ref{pp2}).
\end{proof}

\begin{thm} If  the (v)h-torsion tensor
 ${\widehat{\overline{R}}}$ of  $\overline{D}$  vanishes, then $(M,L)$ is of constant curvature.
     \end{thm}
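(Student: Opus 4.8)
The plan is to read off $\widehat{\overline{R}}$ directly from the explicit formula for $\overline{R}$ in Proposition \ref{th.v3bb}. Since the (v)h-torsion is the contraction $\widehat{\overline{R}}(\overline{X},\overline{Y})=\overline{R}(\overline{X},\overline{Y})\overline{\eta}$, I would set $\overline{Z}=\overline{\eta}$ in that formula and simplify each of its four terms. The aim is to show that after contraction the curvature of $\overline{D}$ differs from that of the Cartan connection only by a multiple of $\ell(\overline{X})\overline{Y}-\ell(\overline{Y})\overline{X}$; specializing one argument to $\overline{\eta}$ will then produce exactly the constant-curvature identity of Definition \ref{def.2}(b).

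First I would dispose of the two curvature terms. Since the v-curvature $S$ of the Cartan connection is indicatory, $S(\overline{X},\overline{Y})\overline{\eta}=0$, so the $S$-term drops out. For the $P$-term I would use $P(\overline{X},\overline{Y})\overline{\eta}=\widehat{P}(\overline{X},\overline{Y})$ together with Relation (\ref{pp2}): evaluating it at $\overline{Z}=\overline{\eta}$ gives
\begin{equation*}
\widehat{P}(\overline{X},\overline{Y})-\widehat{P}(\overline{Y},\overline{X})=-(\nabla_{\beta\overline{\eta}}S)(\overline{X},\overline{Y},\overline{\eta}),
\end{equation*}
and the right-hand side vanishes because $S(\cdot,\cdot)\overline{\eta}=0$ and $\nabla_{\beta\overline{\eta}}\overline{\eta}=K(\beta\overline{\eta})=0$; hence $\widehat{P}$ is symmetric and the antisymmetrized $P$-contribution disappears. (The same symmetry also follows by antisymmetrizing Equation (\ref{pp}) and comparing with the Corollary to Theorem \ref{th.v3th}, where $T$ and $g$ are symmetric.) Finally, using $g(\overline{X},\overline{\eta})=L\,\ell(\overline{X})$, the last term becomes $\tfrac{L}{4}\{\ell(\overline{X})\overline{Y}-\ell(\overline{Y})\overline{X}\}$. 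Collecting everything I expect to arrive at
\begin{equation*}
\widehat{\overline{R}}(\overline{X},\overline{Y})=\widehat{R}(\overline{X},\overline{Y})+\tfrac{L}{4}\{\ell(\overline{X})\overline{Y}-\ell(\overline{Y})\overline{X}\}.
\end{equation*}

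With this identity in hand the conclusion is immediate. Imposing the hypothesis $\widehat{\overline{R}}=0$ gives $\widehat{R}(\overline{X},\overline{Y})=-\tfrac{L}{4}\{\ell(\overline{X})\overline{Y}-\ell(\overline{Y})\overline{X}\}$, and setting $\overline{X}=\overline{\eta}$ and using $\ell(\overline{\eta})=L$ yields
\begin{equation*}
\widehat{R}(\overline{\eta},\overline{Y})=-\tfrac{L}{4}\{L\,\overline{Y}-\ell(\overline{Y})\overline{\eta}\}=-\tfrac14 L^{2}\,\phi(\overline{Y}),
\end{equation*}
since $\phi(\overline{Y})=\overline{Y}-L^{-1}\ell(\overline{Y})\overline{\eta}$. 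By Definition \ref{def.2}(b) this is precisely the statement that $(M,L)$ has constant curvature $k=-\tfrac14$. The one nonroutine point, and the place I would be most careful, is the vanishing of the antisymmetric part of $\widehat{P}$: everything else is pure substitution into Proposition \ref{th.v3bb}, but that step is what removes the $P$-contribution and lets the remainder collapse onto the pure $\phi$-term demanded by the definition.
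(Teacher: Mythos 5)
Your proposal is correct and follows essentially the same route as the paper: contract Proposition \ref{th.v3bb} at $\overline{Z}=\overline{\eta}$ to obtain $\widehat{\overline{R}}(\overline{X},\overline{Y})=\widehat{R}(\overline{X},\overline{Y})+\frac{L}{4}\{\ell(\overline{X})\overline{Y}-\ell(\overline{Y})\overline{X}\}$, then set $\overline{X}=\overline{\eta}$ and invoke Definition \ref{def.2}(b) with $k=-\frac{1}{4}$. The only difference is that you spell out why the $S$- and $P$-terms drop (indicatory $S$, symmetry of $\widehat{P}$ via Relation (\ref{pp2})), details the paper leaves implicit in its one-line derivation.
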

\begin{proof} From proposition  \ref{th.v3bb}, we obtain
 $$ \widehat{\overline{R}}(\overline{X},\overline{Y})=\widehat{R}(\overline{X},\overline{Y})+\frac{L}{4}\{\ell(\overline{X}) \overline{Y}-
 \ell(\overline{Y}) \overline{X}\}.$$
 Now, if the (v)h-torsion tensor  ${\widehat{\overline{R}}}$ of  $\overline{D}$  vanishes, the above relation implies that
    $$ \widehat{R}(\overline{\eta},\overline{Y})=-\frac{1}{4} \, L^{2}\phi(\overline{Y}).$$
 Hence, by Definition \ref{pp}\textbf{(b)},  the result follows.
 \end{proof}


\providecommand{\bysame}{\leavevmode\hbox
to3em{\hrulefill}\thinspace}
\providecommand{\MR}{\relax\ifhmode\unskip\space\fi MR }
\providecommand{\MRhref}[2]{%
  \href{http://www.ams.org/mathscinet-getitem?mr=#1}{#2}
} \providecommand{\href}[2]{#2}

\end{document}